\def\theequation{\thesection.\@arabic \c@equation}
\def\theenumi{\@roman\c@enumi}
\def\@citecolor{blue}
\def\@linkcolor{blue}
\def\@urlcolor{blue}
\newtheorem{lemma}[equation]{Lemma}
\newtheorem{prop}[equation]{Proposition}
\newtheorem{cor}[equation]{Corollary}
\newtheorem{claim*}{Claim}
\newtheorem{thm}[equation]{Theorem}
\newtheorem{question}[equation]{Question}
\theoremstyle{definition}
\newtheorem{remark}[equation]{Remark}
\newenvironment{rmk}[1][]{\begin{remark}[#1] \pushQED{\qed}}{\popQED \end{remark}}
\newtheorem{eg}[equation]{Example}
\newenvironment{example}[1][]{\begin{eg}[#1] \pushQED{\qed}}{\popQED \end{eg}}
\newtheorem{definition}[equation]{Definition}
\newenvironment{defn}[1][]{\begin{definition}[#1]\pushQED{\qed}}{\popQED \end{definition}}
\newtheorem{notn}[equation]{Notation}
\def\<{\langle}
\def\>{\rangle}
\newcommand{\initial}{\operatorname{in}}
\title{Robust Toric Ideals}
\author{Adam Boocher and Elina Robeva}
\begin{document}

\maketitle
\begin{abstract}
We call an ideal in a polynomial ring robust if it can be minimally generated by a universal Gr\"obner basis.  In this paper we show that robust toric ideals generated by quadrics are essentially determinantal.  We then discuss two possible generalizations to higher degree, providing a tight classification for determinantal ideals, and a counterexample to a natural extension for Lawrence ideals.  We close with a discussion of robustness of higher Betti numbers.
\end{abstract}

\section{Introduction}
Let $S = k[x_1,\ldots,x_n]$ be a polynomial ring over a field $k$.  We call an ideal \emph{robust} if it can be minimally generated by a universal Gr\"obner basis, that is, a collection of polynomials which form a Gr\"obner basis with respect to all possible monomial term orders.  Robustness is a very strong condition.  For instance, if $I$ is robust then 
the number of minimal generators of each initial ideal is the same: 
\begin{equation}\label{equality in intro}
\mu(I) = \mu (\initial_< I)\ \mbox { for all term orders $<$.}\end{equation}

In general, we can only expect an inequality ($\leq$).

For trivial reasons, all monomial and principal ideals are robust.  Simple considerations show that robustness is preserved upon taking coordinate projections and joins (see Section 2). 
However, nontrivial examples of robust ideals are rare.   A difficult result of \cite{MR1229427,MR1212627} (recently extended by \cite{Boocher} and \cite{ConcaGorlaDeNegri}) shows that the ideal of maximal minors of a generic matrix of indeterminates is robust.  In the toric case, the class of Lawrence ideals (which naturally includes determinantal ideals) is the largest known class of robust ideals.  

This paper discusses robustness for toric ideals.  Our main result is the following:

\begin{thm}\label{IntroThm}
Let $F$ be a set of irreducible binomials that minimally generate an ideal.  (Assume that $F$ cannot be partitioned into disjoint sets of polynomials in distinct variables.)  If $F$ consists of polynomials of degree $2$ then the following are equivalent:
\begin{itemize}
\item The ideal generated by $F$ is robust
\item $|F| = 1$ or $F$ consists of the $2\times 2$ minors of a generic $2\times n$ matrix
$$\begin{pmatrix}x_1 & \cdots & x_n \\ y_1 & \cdots & y_n \end{pmatrix}$$
up to a rescaling of the variables.
\end{itemize}
In higher degree, it is not true that the robustness of $(F)$ implies that it is determinantal (or even Lawrence).
\end{thm}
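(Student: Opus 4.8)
The plan is to translate the problem into the combinatorics of toric ideals and then to exploit the extremely rigid chain of inclusions
\[ \mathcal{C}_A\ \subseteq\ \mathcal{U}_A\ \subseteq\ \operatorname{Gr}_A \]
among the circuits, the universal Gr\"obner basis, and the Graver basis of the associated configuration. First I would note that an ideal minimally generated by irreducible binomials that cannot be split across disjoint sets of variables is the lattice ideal $I_L$ of a connected saturated lattice $L\subseteq\mathbb Z^n$; after recording that $I_L$ is prime under these hypotheses, we may assume $I_L=I_A$ is a genuine toric ideal, with robustness, connectivity and the generating degrees all preserved by the reduction (the connectivity hypothesis is exactly what is needed, since under disjoint joins robustness is automatically preserved, as noted in Section~2). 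One direction of the equivalence is then immediate: principal ideals are robust, the toric ideal of $\mathbb P^1\times\mathbb P^{n-1}$ — i.e.\ the $2\times2$ minors of a generic $2\times n$ matrix — is robust by the result recalled in the introduction, and since a rescaling of the variables is an automorphism of $S$ fixing every monomial term order, it preserves robustness.

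For the converse, assume $(F)$ is robust with $|F|>1$. The first step is a small lemma: because $F$ is at once a minimal generating set and a universal Gr\"obner basis and consists of binomials, $F$ (after rescaling the individual binomials) must coincide with $\mathcal U_A$, and moreover $\mathcal U_A=\operatorname{Gr}_A$; in particular every element of the Graver basis has degree $2$ and every $f\in F$ is a circuit. The point is that each element of a reduced Gr\"obner basis is forced, by minimality of $F$ together with the binomial structure, to be one of the $f\in F$ up to scalar, so $\mathcal U_A\subseteq F$; and if $\operatorname{Gr}_A$ contained an element of degree $>2$ it would appear in some reduced Gr\"obner basis, contradicting that $F$ is a universal Gr\"obner basis of quadrics.

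The combinatorial heart of the argument is then a local analysis of $F$. Each $f\in F$, being an irreducible quadratic circuit, has the shape $x_ix_j-x_kx_l$ with $i,j,k,l$ distinct, or the shape $x_i^2-x_kx_l$. I would encode $F$ as a labelled (bipartite-flavoured) graph on the variables and argue by connectivity: some two binomials of $F$ share a variable, and I compute their $S$-polynomial on the shared monomial. Being part of a universal Gr\"obner basis, this $S$-polynomial must reduce to $0$ using only $F$, and this must happen for every term order — hence for every admissible choice of which monomial of each participating binomial is leading. Each such reduction forces a further quadratic binomial of prescribed support into $F$; iterating this process (each newly produced binomial being again a quadratic circuit) one shows the variables split into two rows $x_1,\dots,x_n$ and $y_1,\dots,y_n$ and that $F=\{x_iy_j-x_jy_i\}$ up to relabelling and rescaling. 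In the same analysis the shape $x_i^2-x_kx_l$ is eliminated — such a binomial cannot live inside a connected robust system of quadrics with $|F|>1$, since its $S$-polynomials would force companions violating the circuit/Graver property — and one checks that the reconstructed lattice $L$ is forced to be exactly that of $\mathbb P^1\times\mathbb P^{n-1}$, so that no freedom remains beyond rescaling the variables.

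For the final sentence of the theorem it suffices to produce one explicit example of a robust toric ideal generated in degree $\ge 3$ that is neither a determinantal ideal of maximal minors of a generic matrix nor a Lawrence ideal: one starts from a small Lawrence-type configuration, perturbs it so that it is no longer of Lawrence form, and certifies by a finite Graver-basis computation that the Graver basis still equals a minimal generating set (hence the ideal is still robust). The honest main obstacle is the $S$-polynomial case analysis in the previous paragraph: the hypothesis ``Gr\"obner basis for \emph{all} term orders'' must be used to the hilt — each binomial contributes constraints from \emph{both} of its monomials acting as a leading term — the ways two quadratic binomials can overlap come in several shapes, and keeping this case-check simultaneously finite and exhaustive, together with the concluding verification that the combinatorial pattern pins down the lattice (and not merely the support structure) up to rescaling, is where the real work lies.
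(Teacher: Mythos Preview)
Your core strategy---an $S$-polynomial case analysis that forces the determinantal pattern---is exactly what the paper does (Lemma~\ref{main lemma} and the proof of Theorem~\ref{mainthm}), and you correctly flag this as where the real work lies. The gap is in the toric scaffolding you build around it.

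Your opening claim, that an ideal minimally generated by irreducible binomials (indecomposable across variable sets) is automatically the lattice ideal of a \emph{saturated} lattice and hence some $I_A$, is false without robustness and circular with it. Without robustness: $(xy-zw,\,xz-yw)\subset k[x,y,z,w]$ is minimally generated by two irreducible binomials with overlapping support, yet it is not a lattice ideal (it omits $x^2-w^2$) and is not prime. With robustness added, the implication is essentially the theorem itself; the paper explicitly remarks that primality is a \emph{consequence} of the classification, not a hypothesis. So the chain $\mathcal C_A\subseteq\mathcal U_A\subseteq\operatorname{Gr}_A$ is simply not available at the outset. Your subsidiary claim that $\mathcal U_A=\operatorname{Gr}_A$ because a higher-degree Graver element ``would appear in some reduced Gr\"obner basis'' is also incorrect in general: the inclusion $\mathcal U_A\subseteq\operatorname{Gr}_A$ can be strict, and Graver elements need not lie in any reduced Gr\"obner basis.

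Fortunately none of this machinery is needed. Irreducibility alone already forces each binomial into one of the shapes $x_ix_j-x_kx_l$ (disjoint pairs) or $x_i^2-x_kx_l$, and from there the paper proceeds by elementary $S$-pair computations under carefully chosen lex orders, carrying the coefficients throughout and never assuming primality. Two short preliminaries---no monomial occurs in two distinct elements of $F$ (Lemma~\ref{unique terms}), and robustness restricts to $F\cap k[x_1,\dots,x_k]$ (Proposition~\ref{restrict})---replace your lattice-theoretic setup entirely. Your description of the case analysis itself, and of the higher-degree counterexample, matches the paper's approach.
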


Our motivation for studying robust toric ideals is threefold.  First, the study of ideals minimally generated by a Gr\"obner basis (for some term order) is ubiquitous in the literature.  In \cite{ConcaHostenEtal}, Conca et al studied certain classical ideals and determined when they are minimally generated by some Gr\"obner basis.   In the study of Koszul algebras, one of the most fruitful approaches has been via $G$-quadratic ideals - those generated by a quadratic Gr\"obner basis.  We are not aware, however, of any systematic study of ideals minimally generated by a universal Gr\"obner basis; \emph{robust ideals}.  

Second, we focus on toric ideals, because they are a natural testing ground for algebraic questions.  Toric ideals possess binomial universal Gr\"obner bases, and hence and there is a rich theory of Gr\"obner basis theory, see e.g. \cite{ConvexPoly}.  

Finally, passing from an ideal to an initial ideal is a particular type of flat deformation.  In this phrasing, robustness is almost equivalent to the property that the minimal number of generators is preserved by these deformations.\footnote{Note that the equality \ref{equality in intro} is in general, weaker than robustness.  For example, consider the ideal $(x-y,y-z)$.}  This interpretation suggests there might be a geometric interpretation of robustness in terms of the Hilbert scheme.

The paper is organized as follows: In section 2, we prove our main result, Theorem \ref{mainthm} characterizing robust toric ideals generated in degree two. The methods are mainly combinatorial. In Sections 3 and 4 we pose two questions concerning extensions of Theorem \ref{mainthm} using Lawrence ideals. We provide negative and positive answers respectively.  Section 5 closes with a discussion of ``robustness of higher Betti numbers,'' our original motivation for this project.

\section{Quadratic Robust Toric Ideals are Determinantal}
\noindent In the sequel, by \emph{toric ideal} we will always mean a prime ideal generated minimally by homogeneous binomials with nonzero coefficients in $k$.  By the support of a polynomial we mean the set of variables appearing in its terms.  

\begin{defn}
A set $F$ of polynomials in $S$ is called robust if $F$ is a universal Gr\"obner basis and the elements of $F$ minimally generate their ideal.  
\end{defn}

If a set of polynomials $F$ can be written as a union $F = G \cup H$ of polynomials in disjoint sets of variables, then we say that $G$ is a \emph{robust component} of $F$.  If $F$ admits no such decomposition, then we say the set $F$ is \emph{irreducibly robust}.  Notice that robustness is preserved under these disjoint unions, so to classify robust ideals, it suffices to study the irreducible ones.    We remark that the ideal of $F$ corresponds to the join of the varieties corresponding to the $G$ and $H$.

The goal of this section is to prove the following theorem:

\begin{thm}\label{mainthm}
Let $F$ be an irreducibly robust set consisting of irreducible quadratic binomials.   Then $F$ is robust if and only if $|F| = 1$ or $F$ consists of the $2\times 2$ minors of a generic $2\times n$ matrix
$$\begin{pmatrix}x_1 & \cdots & x_n \\ y_1 & \cdots & y_n \end{pmatrix}$$
up to a rescaling of the variables.
\end{thm}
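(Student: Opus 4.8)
The plan is to reduce the classification to a combinatorial statement about the exponent vectors of the binomials in $F$. First I would set up notation: since each $f \in F$ is a quadratic binomial of the form $x_ax_b - \lambda x_cx_d$ (with possibly repeated indices, e.g. $x_a^2 - \lambda x_cx_d$), I associate to $F$ a graph-like combinatorial structure on the variables. The key observation is that the universal Gr\"obner basis property is extremely restrictive in degree two: for any two binomials $f, g \in F$ whose leading terms (under \emph{some} term order) share a variable, the $S$-polynomial $S(f,g)$ must reduce to zero modulo $F$, and since everything has low degree we can enumerate the possibilities. I would first dispose of the case where some variable appears to the second power in a term of some binomial, and more generally control how the supports of the binomials overlap, aiming to show that after rescaling the variables every binomial looks like $x_iy_j - x_jy_i$ for two distinguished ``rows'' of variables.

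Next, assuming $|F| \geq 2$, I would prove a connectivity/rigidity statement: the binomials must ``link up'' into a single $2\times n$ determinantal pattern. Concretely, I would show that one can partition the variables appearing in $F$ into two classes — heuristically, a top row $x_1,\dots,x_n$ and a bottom row $y_1,\dots,y_n$ — such that each generator is (a rescaling of) a $2\times 2$ minor $x_iy_j - x_jy_i$. The mechanism is: take any generator, say $x_1y_2 - x_2y_1$ after relabeling; any other generator sharing a variable with it is forced, by the $S$-pair reduction and by irreducibility of the binomials and of the set $F$, to be of the form $x_iy_j - x_jy_i$ with indices compatible with the same two rows; then irreducible robustness (no disjoint decomposition) forces \emph{all} generators to be reachable this way. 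The minimality of $F$ as a generating set then forces $F$ to consist of \emph{all} $2\times 2$ minors of the resulting $2\times n$ matrix, not a proper subset, because any omitted minor would already lie in the ideal, contradicting that the listed minors generate — wait, rather, one checks that the $2\times 2$ minors of a $2\times n$ matrix are a minimal generating set, and that any robust subset of them generating the same ideal must be all of them. The converse direction — that these minors actually form a universal Gr\"obner basis and minimally generate — is the classical result cited in the introduction (\cite{MR1229427,MR1212627}), so only the forward direction requires work.

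The main obstacle, I expect, is the bookkeeping in the rigidity step: ruling out ``exotic'' quadratic binomial configurations that are robust but not determinantal. For instance, one must rule out chains like $x_1x_2 - x_3x_4$, $x_3x_4 - x_5x_6$, $\dots$ (these fail because, although each is irreducible, the pair has an $S$-polynomial forcing a relation that is not among the generators, or the set turns out reducible or non-minimal), and one must rule out configurations using squares like $x_1^2 - x_2x_3$. The cleanest route is probably to argue that robustness forces each variable to appear in a very limited number of generators and in a very limited way, then to run a local analysis around each variable. A subtle point is handling the rescaling: the coefficients $\lambda$ appearing in the binomials are constrained by the $S$-pair reductions (they must be compatible in a ``cocycle'' sense around cycles in the support graph), and one shows this cocycle condition is exactly what allows the variables to be rescaled to put everything in the form $x_iy_j - x_jy_i$ with coefficient $-1$. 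I would present this as: (1) normalize and analyze a single generator; (2) analyze overlapping pairs via $S$-polynomials; (3) propagate via irreducibility to get the two-row structure; (4) use minimality to get \emph{all} minors; (5) check the rescaling cocycle; (6) cite the converse.
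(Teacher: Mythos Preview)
Your outline is correct and follows essentially the same route as the paper: rule out squares and bad overlap patterns via $S$-pair computations (the paper packages this as a single technical lemma with several sub-cases, preceded by the observation that no monomial can occur in two distinct elements of $F$), then show any two overlapping generators force a full $2\times 3$ set of minors into $F$, propagate by irreducibility to a $2\times n$ matrix, and handle the coefficients exactly as you describe via the compatibility forced by $S$-pair reduction. One small correction to your sketch: it is not minimality that forces $F$ to contain \emph{all} the minors---rather, the $S$-pair reductions themselves force each missing minor into $F$ (robustness, not minimality, is doing the work there), and your ``chain'' example $x_1x_2-x_3x_4,\ x_3x_4-x_5x_6$ is already excluded by the easier observation that no monomial appears in two generators.
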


\begin{rmk}
In the statement of the Theorem, we only assume that the generators are irreducible. It turns out that this is sufficient to show that the ideal they generate is prime. 
\end{rmk}

Notice that one direction follows immediately from the results of \cite{MR1212627} which show that the $2\times 2$ minors are a universal Gr\"obner basis.  To prove the converse, our technique is essentially to eliminate certain combinations of monomials from appearing in $F$.  To simplify notation, we will omit writing coefficients in the proofs when it is clear that they do not affect the argument.  In particular, we treat the issue of coefficients only in tackling the proof of Theorem \ref{mainthm} itself and not in earlier lemmas.  

\begin{lemma}\label{unique terms}
Let $F$ be a robust set of prime quadratic binomials.  Then no monomial appears as a term in two different polynomials in $F$.
\end{lemma}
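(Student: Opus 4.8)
The plan is to argue by contradiction: suppose some monomial $m$ appears as a term in two distinct polynomials $f, g \in F$. Since $f$ and $g$ are quadratic binomials, we may write $f = m - m_1$ and $g = m - m_2$ (up to coefficients, which I will suppress as the paper instructs), where $m_1 \neq m_2$ because $f \neq g$ and $F$ minimally generates its ideal. The key observation is that $f - g = m_2 - m_1$ (again up to a scalar), and this binomial lies in the ideal $(F)$. I would now exploit robustness: choose a term order $<$ under which $m$ is the leading term of both $f$ and $g$ (for instance, any order in which each variable dividing $m$ is large — one can always arrange $m$ to be the larger of the two monomials in each of $f$ and $g$ simultaneously, since the "other" monomials $m_1, m_2$ involve variables that can be taken small, unless $m_1$ or $m_2$ shares a variable with $m$, a case I would handle separately by a more careful choice). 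Then in the Gröbner basis $F$ with respect to $<$, the $S$-polynomial $S(f,g)$ must reduce to zero. But $S(f,g)$ is, up to scalar, exactly $m_1 - m_2$ (or a monomial multiple thereof if $\gcd$ considerations intervene), a binomial of degree $\le 2$ whose terms are strictly smaller than $m$.

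The next step is to derive a contradiction from the fact that this small binomial $m_1 - m_2 \in (F)$. Since $(F)$ is prime (toric) and $m_1 - m_2$ is degree $\le 2$, it must either be zero — impossible since $m_1 \neq m_2$ — or it must itself be, up to scalar, one of the generators in $F$, OR it must be reducible/not prime, contradicting that $F$ consists of prime binomials and is a reduced Gröbner basis. More precisely: after autoreduction, a reduced universal Gröbner basis consisting of minimal generators cannot contain a binomial all of whose terms are divisible by leading terms of other elements; and a binomial of the form $m_1 - m_2$ with $m_1, m_2$ strict divisors of (or strictly below) $m$ being in the ideal forces a reduction relation among the $m_i$'s that collapses the minimality of $F$. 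I would formalize this by observing that $m_1 - m_2$, being in $(F)$ and of degree $\le 2$, reduces to zero modulo $F$; tracking this reduction yields either that $m_1 = m_2$ or that one of $f, g$ is redundant as a generator, i.e. $f - g \in (F \setminus \{f\})$ or similar, contradicting minimal generation.

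I expect the main obstacle to be the bookkeeping around shared variables and the precise combinatorics of which term orders can be realized — specifically, making rigorous the claim that we can always pick a term order simultaneously selecting $m$ as the leading term of both $f$ and $g$, and then controlling exactly what $S(f,g)$ looks like (it could be a single monomial, if $m_1$ and $m_2$ share a variable, rather than a genuine binomial). The degenerate sub-cases — e.g. $f = xy - z^2$, $g = xy - zw$, giving $S(f,g) = z^2 - zw = z(z-w)$, whence $z - w$ or $z^2 - zw$ lies in the prime ideal $(F)$, forcing $z - w \in (F)$, which is a linear form and contradicts that $F$ is irreducibly robust and generated in degree exactly $2$ — will each need a short separate argument, but all follow the same template: produce a low-degree element of the prime ideal $(F)$ that cannot be there. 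The cleanest packaging is probably: (1) reduce to the case $m$ can be made the common leading term; (2) compute $S(f,g)$ and note it is a monomial or binomial strictly below $m$; (3) since $(F)$ is prime and binomial, any such element forces either a linear form into $(F)$ (contradicting irreducible robustness / degree-$2$ generation) or redundancy of $f$ or $g$ as a generator (contradicting minimality).
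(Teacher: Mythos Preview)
Your approach is the same as the paper's, but you are working harder than necessary in two places, and one of your detours leans on a hypothesis the lemma does not have.

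First, on selecting the term order: you flag as a separate case the possibility that $m$ shares a variable with $m_1$ or $m_2$. This case never occurs. Since $f = m - m_1$ is a \emph{prime} binomial, the monomials $m$ and $m_1$ are coprime (a shared variable would be a nontrivial factor of $f$), and likewise for $g$. Hence a lex order placing the variables in the support of $m$ first automatically selects $m$ as the lead term of both $f$ and $g$. This is exactly how the paper dispatches the issue in one line; primality is doing the work you were planning to do by hand.

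Second, on the contradiction: once $m$ is the common lead term, $S(f,g)$ is (up to scalar) $m_1 - m_2$, a nonzero homogeneous element of degree~$2$. Since $F$ is a Gr\"obner basis it reduces to zero modulo $F$, and because every element of $F$ has degree~$2$, each reduction step subtracts a \emph{constant} multiple of some $h \in F$. Moreover $m$ does not occur in $m_1 - m_2$, so neither $f$ nor $g$ can be used. The reduction thus yields an identity $f - c g = \sum_{h \in F \setminus \{f,g\}} c_h h$ with $c, c_h \in k$, i.e.\ a $k$-linear relation among the elements of $F$, contradicting minimal generation. This is precisely the ``degree zero syzygy'' the paper invokes. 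Your route through factoring $S(f,g)$, linear forms in the prime ideal, and ``irreducible robustness'' is unnecessary, and the last of these is not even a hypothesis of the lemma as stated.
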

\begin{proof}
Suppose that the monomial $m$ appears in the polynomials $f,g\in F$.  Let $<$ be a Lex term order taking the support of $m$ to be first.  
Since $f$ and $g$ are prime, $<$ will select $m$ as the lead term of both $f$ and $g$, and applying Buchberger's algorithm, we would obtain a degree zero syzygy of the elements in $F$, contradicting the minimality of $F$.
\end{proof}

\begin{prop}\label{restrict}
If $F$ is robust, and $0\leq k \leq n$, then so is $F\cap k[x_1,\ldots, x_k]$. 
\end{prop}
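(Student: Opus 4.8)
The plan is to show that restriction to a coordinate subspace preserves both halves of the definition of robustness: that $F \cap k[x_1,\dots,x_k]$ minimally generates its ideal, and that it is a universal Gr\"obner basis. Write $S' = k[x_1,\dots,x_k]$ and $F' = F \cap S'$. The key observation is that the binomials in $F'$ are exactly the binomials of $F$ whose support is contained in $\{x_1,\dots,x_k\}$, and that this set is detected by a single term order: take any term order $<$ on $S$ that is an elimination order for $x_{k+1},\dots,x_n$ (i.e. any monomial involving a later variable beats any monomial in $S'$ alone), refined arbitrarily. Since $F$ is assumed to be a universal Gr\"obner basis, $F$ is a Gr\"obner basis with respect to this $<$, so by the standard elimination theorem $F \cap S'$ is a Gr\"obner basis of $(F) \cap S'$ with respect to the restriction of $<$ to $S'$.

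First I would verify that $F'$ minimally generates $(F')=(F)\cap S'$. Minimality is immediate: a subset of a minimal generating set whose elements all lie in $S'$ cannot have one of its members in the ideal generated by the others, since that relation would already witness non-minimality inside $F$ (the coefficients are in $k$ and the variables are a subset). That $F'$ generates all of $(F)\cap S'$ is exactly the content of the elimination theorem invoked above. Second, I would show $F'$ is a \emph{universal} Gr\"obner basis of $(F')$, not merely a Gr\"obner basis for one order. Given an arbitrary term order $<'$ on $S'$, extend it to a term order $<$ on $S$ that first eliminates $x_{k+1},\dots,x_n$ and then uses $<'$ on the $S'$-variables; since $F$ is a universal Gr\"obner basis it is Gr\"obner with respect to this $<$, and the elimination theorem again yields that $F'$ is Gr\"obner for $<'$. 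As $<'$ was arbitrary, $F'$ is a universal Gr\"obner basis.

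The one point that needs a little care — and the closest thing to an obstacle — is making sure the elimination theorem applies in the form needed: that for an elimination order, $\initial_{<}(I) \cap S' = \initial_{<'}(I \cap S')$ and that a Gr\"obner basis of $I$ restricts to a Gr\"obner basis of $I \cap S'$. This is classical (see e.g.\ the treatment in \cite{ConvexPoly}), but it is worth stating it cleanly because we are using it twice and in a slightly non-standard direction: we already know the \emph{big} basis is Gr\"obner (indeed universally so), and we are harvesting consequences for the contracted ideal. Note also that irreducibility of the binomials and primeness of $(F)$ are not needed here; the statement and proof of Proposition~\ref{restrict} are purely about Gr\"obner bases and minimal generation, so nothing about the quadratic or toric hypotheses enters. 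Finally, the hypothesis $0 \le k \le n$ includes the degenerate cases $k=0$ (where $F' = \emptyset$) and $k=n$ (where $F'=F$), both of which are trivially consistent with the claim.
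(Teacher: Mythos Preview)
Your proposal is correct and follows essentially the same approach as the paper: extend an arbitrary term order on $S'=k[x_1,\dots,x_k]$ to an elimination order on $S$ (what the paper phrases as ``taking $x_1,\dots,x_k$ last''), invoke the elimination theorem to conclude that $F'=F\cap S'$ is a Gr\"obner basis for $(F)\cap S'$ under the restricted order, and note that minimality is inherited from $F$. The paper's proof is terser---it does not spell out that $(F')=(F)\cap S'$ or discuss the minimality step---but the underlying argument is identical.
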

\begin{proof}
Write $F_k = F\cap k[x_1,\ldots,x_k]$. It is clear that $F_k$ minimally generates the ideal $(F_k)$.  Let $<$ be any term order on $k[x_1,\ldots,x_k]$.  Extend $<$ to a term order $<_S$ on $S$, taking $x_1,\ldots,x_k$ last.  Then since $F$ is a Gr\"obner basis with respect to $<_S$, by basic properties of Gr\"obner bases, we know that $F_k$ will be a Gr\"obner basis with respect to $<$.
\end{proof}

The above proposition is extremely useful, because in our analysis it will be helpful to assume we are working in a ring with few variables.  We will use this reduction extensively in the following main technical lemma.  We use the letters $a,\ldots,z$ when convenient for ease of reading.

\begin{lemma}\label{main lemma}
Let $F$ be a robust set of prime quadratic binomials:
\begin{enumerate}[(a)]
\item $F$ cannot contain two polynomials of the form
$$f = x^2 + yz, \ g = xy + m$$
or 
$$f = x^2 + y^2, \ g = xy + m$$
or 
$$f = x^2 + y^2, \ g = xz + m.$$
where $m$ is any monomial.
\item $F$ cannot contain two polynomials of the form
$$f=x_ix_j + x_kx_l, \ g=x_ix_k + x_px_q$$
(here we do not assume $i,j,k,l,p,q$ are distinct.)
\item $F$ cannot contain two polynomials of the form
$$f = x^2 + yz, \ g = xw + m$$
or 
$$f = x^2 + yz, \ g = yw + m$$
where $m$ is any monomial.
\item If $f,g\in F$ are two polynomials whose supports share a variable, then all terms of $f$ and $g$ are squarefree.
\item If $F$ contains two polynomials whose supports share a variable, then (up to coefficients) $F$ must contain the $2\times 2$ minors of a generic matrix.
$$\begin{pmatrix} a & b & c \\ d & e & f\end{pmatrix}$$
\end{enumerate}
\end{lemma}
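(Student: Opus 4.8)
The plan is to prove the five statements essentially in order, using parts (a)–(d) as building blocks for part (e), which is the real target. Throughout, the main tool is the interplay between Lemma \ref{unique terms} (no monomial is a term of two generators) and Buchberger's criterion: if $f$ and $g$ share a variable, then for a suitable Lex order the $S$-polynomial of $f$ and $g$ must reduce to zero modulo $F$, and since everything is quadratic and binomial, this forces a third generator $h\in F$ whose lead term divides a specific quadratic monomial. By Proposition \ref{restrict} I may always pass to the subring spanned by the variables actually occurring, so these reductions happen in a polynomial ring with very few variables and can be checked by finite case analysis.

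For parts (a), (b), (c): in each case I would choose a Lex order making the "overlap" variable largest, compute the $S$-pair of the two hypothesized generators, and read off which quadratic monomial a new lead term $\initial_< h$ would have to divide. Then I eliminate each candidate $h$ in turn: either $h$ would share a term with $f$ or $g$ (violating Lemma \ref{unique terms}), or $h$ itself together with $f$ (or $g$) would recreate a forbidden configuration from an already-proved part, or $h$ is reducible / fails primeness. For instance in (a), $f=x^2+yz$, $g=xy+m$: the $S$-polynomial is (up to sign) $x m - y^2 z$ after clearing, and reducing forces a quadratic relation among $\{x,y,z\}\cup\supp(m)$ which, combined with $f$, is ruled out. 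Part (d) is the cleanest consequence: if $f,g$ share a variable and some term — say of $f$ — is a square $x^2$, then $f$ falls into one of the shapes covered by (a) or (c) relative to $g$ (after possibly swapping roles and using Lemma \ref{unique terms} to control which variable is shared), a contradiction; hence all terms of both $f$ and $g$ are squarefree.

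For part (e), now suppose $f,g\in F$ share a variable. By (d) both are squarefree quadratic binomials, so WLOG $f = ab + cd$ (distinct letters, since $f$ is irreducible) and $g$ shares exactly one variable with $f$ by Lemma \ref{unique terms} — say $g = ae + (\text{something})$. By (b), $g$ cannot be $ae+cd$-type sharing the monomial $cd$, nor can its shape be $ae+bf$... — here I run the same $S$-pair analysis: $S(f,g)$ produces, after reduction, the requirement that $be + cd$-type or $de + $-type relations lie in $F$, and iterating, I am forced to produce $af+\cdots$, $be+\cdots$, etc., until the generators visible are exactly the six $2\times 2$ minors $\{ae-bd,\ af-cd,\ bf-ce\}$ of $\left(\begin{smallmatrix} a & b & c \\ d & e & f\end{smallmatrix}\right)$ (up to coefficients). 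The key point is that each application of Buchberger forces a new minor, and (b)–(d) prevent any "wrong" binomial from appearing, so the process closes up precisely on the determinantal set and cannot stop early (that would contradict $F$ being a Gr\"obner basis) nor continue (no room for a seventh quadric on three columns of a $2\times 3$ matrix without violating Lemma \ref{unique terms}).

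The main obstacle I expect is the bookkeeping in part (e): controlling which variables coincide as the minors are forced out, so that one genuinely lands on a generic $2\times 3$ pattern rather than some degenerate identification of the $a,\ldots,f$. This is exactly where Proposition \ref{restrict} and Lemma \ref{unique terms} do the heavy lifting — restricting to the support keeps the case list finite, and the "no shared monomial" rule kills the degenerate patterns — but assembling it into a clean argument, and handling the coefficients (which is explicitly deferred to the proof of Theorem \ref{mainthm} itself), is the delicate part. Parts (a)–(c) are routine finite $S$-pair computations by comparison.
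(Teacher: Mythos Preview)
Your overall strategy is the paper's: compute $S$-pairs under well-chosen lex orders, force a new generator $h\in F$, and kill the candidates using Lemma~\ref{unique terms}, primality, and the earlier parts. Two places need repair.

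In (a), with the order you implicitly use (the ``overlap'' variable $x$ largest), the $S$-pair is indeed $y^2z - xm$, but its lead term is $xm$, and the required $h$ could have lead term $xu$ for some $u\mid m$, or $m$ itself --- neither is immediately excluded by Lemma~\ref{unique terms} or by anything already proved. The paper instead takes lex with $y>x>z>\cdots$, so the lead terms of $f,g$ are $yz$ and $xy$; then $S(f,g)=x^3-mz$ has lead term $x^3$, forcing some $h$ with the term $x^2$ --- but $x^2$ already sits in $f$, contradicting Lemma~\ref{unique terms} in one line. Which shared variable you put first is the entire trick in (a)--(c), and your sketch picks the wrong one.

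In (e), the claim that $g$ ``shares exactly one variable with $f$ by Lemma~\ref{unique terms}'' is false: that lemma forbids only a shared \emph{monomial}. Writing $p=ae-bd$, the second binomial can be $q=af-cd$ (sharing $a$ and $d$; six variables) or $q=af-cg$ with $c,g$ both new (sharing only $a$; seven variables). The paper treats these separately. The six-variable case goes essentially as you outline: an $S$-pair under $a>b>d>\cdots$ forces a third generator, and parts (b)--(d) pin it down to $bf-ce$, giving the three minors. The seven-variable case is the piece you are missing: there the forced third generator $r$ together with $q$ is itself a six-variable pair, and applying the six-variable argument to $(q,r)$ produces a generator containing the term $ae$ --- already present in $p$, contradicting Lemma~\ref{unique terms}. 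So seven variables is \emph{impossible}, not an alternate route to the minors; without this reduction your argument in (e) does not close.

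Minor cleanup: there are three $2\times 2$ minors of a $2\times 3$ matrix, not six; and the statement of (e) only asserts that $F$ \emph{contains} these minors, so your ``nor continue'' clause is irrelevant here (that step belongs to the proof of Theorem~\ref{mainthm}).
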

\begin{proof}
a) We prove the first statement.  The proofs of the others are similar.  Suppose that $f,g\in F$.  Notice that by primality, $m$ cannot contain a factor of $y$.  Let $<$ be the lex term order with ($y>x>z>$ all other variables).  Then the $S$-pair of $f$ and $g$ is $x^3 - mz$, whose lead term is $x^3$.  Since $F$ is a Gr\"obner basis with respect to $<$ we must have some polynomial $h$ whose lead term divides $x^3$.  Since $x^2$ is not the lead term of $f$, $h\neq f$ and we must have two distinct polynomials in $F$ with $x^2$ appearing.  This contradicts Lemma \ref{unique terms}.

b) Suppose that $f,g\in F$.  By restricting to the subring $k[x_i,x_j,x_k,x_p,x_q]$, Proposition \ref{restrict} tells us we can assume $F$ involves only these variables. Notice by primality (and part (a)) we know that $k$ and $i$ are distinct from $j,l,p,q$.  Let $<$ be the lex term order with $(x_k>x_i>$ all other variables.)  Then the $S$-pair of $f$ and $g$ is $x_i^2x_j - x_lx_px_q$, whose lead term is $x_i^2x_j$.  As in part a) we must have some polynomial $h\in F$ whose lead term divides $x_i^2x_j$.  The only possible monomials are $x_i^2$ and $x_ix_j$.  And since $x_ix_j$ appears in $f$ (and is not a lead term) we must have a polynomial $h = x_i^2 + x_ax_b \in F$ for some $a,b\in \{i,j,k,l,p,q\}$.  So $F$ contains
$$f = x_ix_j + x_kx_l, \ g = x_ix_k + x_px_q, \ h= x_i^2 + x_ax_b.$$
Applying part a), and primality, we know that $a,b \in \{l,p,q\}$.  By part a), we know that $x_ax_b$ must be squarefree, and since $x_px_q$ already appears, we can say (renaming $p$ and $q$ if necessary,) that $x_ax_b = x_lx_p$.  But now choosing $<$ to be the lex term order with ($x_l > x_p > x_i > x_k> $ all other variables,) we see that the $S$-pair of $f$ and $h$ is $x_i^2x_k - x_ix_jx_k$ whose lead term is $x_i^2x_k$ which is only divisible by the monomials $x_i^2$ and $x_ix_k$, neither of which can be a lead term of a polynomial in $F$ by Lemma \ref{unique terms}.

c) We will prove the first statement.  The second proof is similar.  Suppose that $f,g \in F$.  First restrict, using Proposition \ref{restrict} to assume we are working only with the variables $x,y,z,w$ and the factors of $m$.  Let $<$ be the lex term order with ($w>x>$ all other variables. Taking the $S$-pair of $f$ and $g$, we obtain $wyz - mx$, whose lead term is $wyz$.  Since this must be divisible by the lead term of some polynomial $h\in F$, without loss of generality, we assume $h = wy + n$.  Consider now the possibilities for $n$.  By primeness $n$ cannot contain a factor of $w$ or $y$.  By part b) it cannot contain a factor of $x$ or $z$.  Hence, the only possible options left are that the factors of $n$ are contained in the factors of $m$.  But this means that $m,n$ are $ab,a^2$ for some (new, distinct) variables $a,b$.  As in (b), we can conclude this is impossible.

d) This follows immediately from parts a) - c).

e) We assume that $F$ contains two polynomials whose supports intersect.  By d) we can assume that these polynomials are squarefree, and we write them as $p = ae - bd,q = af - m_1m_2$ where $m_i$ represents some variable. Notice that by primality and part b), neither $m_1$ nor $m_2$ can be $a,e$ or $f$.  Nor can $m_1m_2=bd$ (since it would be a repetition).  Hence we may as well assume $m_1$ is different from the other variables, and call it $c$.  There are now two cases:  Either $m_2$ is also a new variable $g$, or it isn't, in which case we can see that without loss of generality, $m_2 = d$.  Rewriting:  If $p,q$ are two polynomials whose supports intersect, then they must contain either $6$ or $7$ distinct variables.  

In the case of $6$ variables, restrict $F$ to the subring $k[a,b,c,d,e,f]$.  Now 
$$p = ae - bd, \ q= af - cd.$$  Computing an $S$-pair with the lex order ($a>b>d>$ all other variables) we obtain $fbd - ecd$ with lead term $fbd$.  This must be divisible by the lead term of some polynomial $r\in F$.  But this lead term cannot be $bd$ (by its presence in $p$), hence it must be either $bf$ or $df$.  In case it is $bf$ then $F$ contains a polynomial of the form $r = bf - n_1n_2$.  Now $n_1,n_2\in \{a,c,d,e\}$ by primality.  And part b) of this lemma allows us to further say $n_1,n_2\in \{c,e\}$ which along with squarefreeness implies that $r = bf - ce$ as required.  In case the term is $df$, similar considerations show that parts a) - d) will not allow any $n_1n_2$.

In the case of $7$ variables, restrict $F$ to the subring $k[a,b,c,d,e,f,g]$.  Now 
$$p = ae - bd, \ q = af - cg.$$
Computing an $S$-pair with the lex term order ($a>b>d>$ all other variables):  we obtain $fbd-ebg$ with lead term $fbd$.  This must be divisible by the lead term of some polynomial $r\in F$.  But this lead term cannot be $bd$ (by its presence in $p$), hence it must be either $bf$ or $df$.  By symmetry we can assume that it is $df$ and that $F$ contains a polynomial of the form $df - n_1n_2$.  Now $n_1,n_2\in \{a,b,c,e,g\}$ by primality, and part b) restricts us further to $n_1,n_2 \in \{c,e,g\}$.  And since $cg$ already appears, we can conclude that $n_1n_2 = eg$ or $ec$ (and again by symmetry, we may assume $n_1n_2 = ec$.  But now notice that $q$ and $r$ are two polynomials whose supports intersect, and involve only $6$ variables.  Hence, by the previous part of this proof, we can conclude that $F$ contains a polynomial $s = gd - ae$.  But this is a contradiction by Lemma \ref{unique terms}. 
\end{proof}

\begin{proof}[Proof of Theorem \ref{mainthm}]
Suppose that $|F|>1$.  Since $F$ is irreducible, it must contain two polynomials whose supports intersect.  By Lemma \ref{main lemma} we can conclude that $F$ contains polynomials of the form:
$$p_1 =  ae - bd, \ p_2 = af - cd, \ p_3 = bf - ce$$
up to coefficients.  However, computing an $S$-pair with the lex order on ($a>b>\cdots >f$) we obtain:
$$S(p_1,p_2) = bdf - cde \mbox{ (with some nonzero coefficients)}$$
which after reducing by $p_3$ we obtain either zero, or a constant multiple of $cde$.  In the latter case, in order to continue the algorithm, we would have to have another polynomial in $F$ whose lead term divided $cde$.   By the presence of $p_1,p_2,p_3$, the terms $cd$ and $ce$ are prohibited.  And by Lemma \ref{main lemma} b), $de$ is also prohibited.  Hence, this $S$-pair must reduce to zero after only two subtractions.

This means in fact, that the polynomials are precisely determinants of some matrix 
$$\begin{pmatrix} \lambda_1 a & \lambda_2 b &\lambda_3 c 
\\
\mu_1 d & \mu_2 e & \mu_3 f \end{pmatrix}$$
for some nonzero constants $\lambda_i, \mu_i$.

To complete the proof, suppose that $F\neq \{p_1,p_2,p_3\}$.  Since $F$ is irreducible, one polynomial $p_4\in F$  must share a variable with say, $p_1$.  Renaming variables if necessary, say that variable is $a$.  Then (ignoring constants for the moment) by applying the proof of Lemma \ref{main lemma} e), to the polynomials $p_1 = ae - bd$ and $p_4 = ah - m_1m_2$ we can conclude that $F$ contains the minors of the matrix 
$$\begin{pmatrix} \lambda_1 a & \lambda_2 b &\lambda_4 g 
\\
\mu_1 d & \mu_2 e & \mu_4 h \end{pmatrix}.$$

Applying this technique to $p_2$ and $p_4$ as well, shows that we in fact get all $2\times 2$ minors of the full matrix 

$$\begin{pmatrix} \lambda_1 a & \lambda_2 b & \lambda_3 c& \lambda_4 g 
\\
\mu_1 d & \mu_2 e &\mu_3 f &  \mu_4 h \end{pmatrix}.$$

Inductively we continue this process until we obtain all of $F$.  
\end{proof}
Notice that in our proof, every term order we used was a Lex term order, and we ended up a with a prime ideal.  Hence, we have the following: 
\begin{cor}
If $F$ is a set of prime quadratic binomials that minimally generate an ideal.  Then the following are equivalent:
\begin{enumerate}  \item $F$ is a Gr\"obner basis with respect to every Lex term order.
\item $F$ is a Gr\"obner basis with respect to every term order.
\item $F$ generates a prime ideal and the irreducible robust components of $F$ are generic determinantal ideals and hypersurfaces.
\end{enumerate}
\end{cor}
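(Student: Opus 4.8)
The plan is to prove the cycle of implications $(2)\Rightarrow(1)\Rightarrow(3)\Rightarrow(2)$. The implication $(2)\Rightarrow(1)$ is immediate, since every Lex order is a term order. For $(3)\Rightarrow(2)$ I would note that a hypersurface component is a principal ideal, hence trivially a Gr\"obner basis with respect to every term order, while a generic determinantal component is, after rescaling the variables, the set of $2\times 2$ minors of a generic $2\times n$ matrix, which is a universal Gr\"obner basis by \cite{MR1212627}; rescaling the variables only multiplies each monomial by a nonzero scalar, hence alters no lead term with respect to any term order, so the rescaled minors also form a universal Gr\"obner basis. Since being a Gr\"obner basis under every term order is preserved under disjoint unions (Section 2) and $F$ is the disjoint union of its components, $(2)$ follows.

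The real content is $(1)\Rightarrow(3)$, and for this the plan is to re-run the proof of Theorem \ref{mainthm}, observing that it never invokes anything beyond Lex orders. Indeed, every $S$-pair computation in Lemma \ref{unique terms}, Lemma \ref{main lemma}, and the proof of Theorem \ref{mainthm} is carried out with a Lex order, and the extension used in the proof of Proposition \ref{restrict} — appending $x_1,\dots,x_k$ as the smallest variables in their given relative order — carries a Lex order to a Lex order. Hence all of these statements hold verbatim with ``robust'' weakened to ``minimally generates its ideal and is a Gr\"obner basis with respect to every Lex order.'' Given $F$ satisfying $(1)$, I would decompose $F = F_1 \sqcup\cdots\sqcup F_r$ into its irreducible robust components (polynomials in pairwise disjoint sets of variables), invoke the Lex analogue of Proposition \ref{restrict} to see that each $F_i$ again satisfies $(1)$ in its own subring and minimally generates $(F_i)$, and then apply the proof of Theorem \ref{mainthm} to each $F_i$: this shows $F_i$ is either a single irreducible quadratic binomial (a hypersurface) or, up to rescaling, the $2\times 2$ minors of a generic $2\times n$ matrix (generic determinantal). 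Primality of $(F)$ is then recovered from the fact that each $(F_i)$ is a geometrically prime toric ideal — determinantal ideals, and principal ideals generated by irreducible quadratic binomials such as $x^2-yz$ or $xy-zw$, define geometrically integral toric varieties — so their sum over disjoint sets of variables is again prime. This establishes $(3)$.

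I expect the main obstacle to be purely a matter of careful bookkeeping: one must verify that no step in the chain Lemma \ref{unique terms} $\to$ Lemma \ref{main lemma} $\to$ Theorem \ref{mainthm} secretly uses a non-Lex order or a consequence of full robustness rather than of Lex-robustness, and that minimality of the generating set descends to the components while primality ascends from them. Only the last point has a genuine subtlety: over a non-closed field a disjoint-variable sum of prime ideals need not be prime (for instance $(x^2+1)$ and $(y^2+1)$ over $\mathbb{R}$), which is why one appeals to the geometric integrality of the components rather than to bare primality.
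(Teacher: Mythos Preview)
Your proposal is correct and follows essentially the same approach as the paper, whose proof is the single sentence preceding the corollary: ``Notice that in our proof, every term order we used was a Lex term order, and we ended up with a prime ideal.'' You have simply fleshed out this observation with the necessary bookkeeping (the Lex-to-Lex extension in Proposition~\ref{restrict}, the descent to components, and the ascent of primality), and your caveat about primality over non-closed fields is a genuine subtlety that the paper itself glosses over.
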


\begin{cor}
If $X$ is a generic $k\times n$ matrix, and $F$ is the set of $2\times 2$ minors, then $F$ is a universal Gr\"obner basis if and only if $k = 2$.
\end{cor}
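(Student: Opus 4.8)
The plan is to obtain this corollary as a quick consequence of Theorem \ref{mainthm} and of the fact, noted in the introduction, that the $2\times 2$ minors of a generic $2\times n$ matrix form a universal Gr\"obner basis. Since transposing $X$ changes the set of $2\times 2$ minors only by signs, I will assume from the start that $2\le k\le n$; in particular, $k\ge 3$ forces $n\ge 3$, which is what makes the counting below work.

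For the ``if'' direction, suppose $k=2$. Then $F$ is the set of maximal minors of a generic $2\times n$ matrix, which form a universal Gr\"obner basis by \cite{MR1212627,MR1229427} (the case $n=2$ being the trivial $|F|=1$); since the $2\times 2$ minors of a generic matrix minimally generate their ideal, $F$ is in fact robust, and in particular a universal Gr\"obner basis.

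For the ``only if'' direction, suppose $k\ge 3$ and, seeking a contradiction, that $F$ is a universal Gr\"obner basis. First I would check that Theorem \ref{mainthm} applies: the elements of $F$ are prime quadratic binomials that minimally generate $(F)$, so $F$ is robust; and any two entries of $X$ occur together in the support of some $2\times 2$ minor, so $F$ cannot be partitioned into two subsets in disjoint variables, i.e.\ $F$ is irreducibly robust. Since $|F|=\binom{k}{2}\binom{n}{2}\ge 3>1$, Theorem \ref{mainthm} forces $F$ to be, after a rescaling and relabeling of the variables, the set of $2\times 2$ minors of a generic $2\times m$ matrix for some $m$. A rescaling-and-relabeling is a linear automorphism of $S$ that carries the generating set $F$ bijectively onto the standard set of minors, so it preserves both the number of variables and $|F|$; hence $kn=2m$ and $\binom{k}{2}\binom{n}{2}=\binom{m}{2}$. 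Eliminating $m$ and simplifying collapses these two relations to $(k-2)(n-2)=0$, contradicting $k,n\ge 3$. Therefore $F$ is not a universal Gr\"obner basis.

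The only part requiring any care---rather than a genuine obstacle---is the verification that $F$ really meets the hypotheses of Theorem \ref{mainthm} (primality, minimal generation, and irreducibility); once that is in place the argument is a one-line computation. If one prefers not to count generators, one can instead compare codimensions---$(k-1)(n-1)$ for the $k\times n$ determinantal ideal against $m-1$ for the $2\times m$ one---which together with $kn=2m$ again yields $(k-2)(n-2)=0$.
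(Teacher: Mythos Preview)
Your argument is correct and is essentially what the paper intends: the corollary is stated without proof because it is meant to fall out of Theorem~\ref{mainthm} once one notes that the $2\times 2$ minors of a generic $k\times n$ matrix are prime, minimally generating, and form an irreducibly robust set. Your counting step---matching $kn=2m$ and $\binom{k}{2}\binom{n}{2}=\binom{m}{2}$ to get $(k-2)(n-2)=0$---is a clean way to finish, and your alternative via codimension works equally well; either is in the spirit of the paper's implicit argument.
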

\begin{remark}
It is almost the case that \emph{every} irreducibly robust component is determinantal.  Indeed, every prime binomial is up to rescaling either $xy-zw$ or $x^2-yz$.  The former is determinantal.  Thus the only possible non-determinantal robust component is $\{x^2-yz\}$.
\end{remark}

\section{From Determinants to Lawrence Ideals}
Encouraged by the result of the previous section, it is natural to ask to what extent robustness classifies generic determinantal ideals.  Indeed, it is easy to see that the ideal of minors of any $2\times n$ matrix whose entries are relatively prime monomials will be robust.  There are two questions we consider:

\begin{question}\label{question3.1} \ 
\begin{enumerate}[1.]
	\item \label{(i)} If $I$ is a robust toric ideal, is $I$ generated by the $2\times 2$ minors of some matrix of monomials?
	\item \label{(ii)}Precisely which matrices of monomials provide robust ideals of $2\times 2$ minors?
\end{enumerate}
\end{question}

The answer to the first question is negative.  Examples are provided by \emph{Lawrence ideals}, studied in \cite{ConvexPoly}.  If $I$ is any toric ideal, with corresponding  variety $X\subset \mathbb{P}^{n-1}$, then the ideal $J$ corresponding to the re-embedding of $X$ in $(\mathbb{P}^1)^{n-1}$ is called the Lawrence lifting of $I$. Its ideal is generated by polynomials of the following form:
$$J_L = \left( \mathbf{x^ay^b} -\mathbf{x^by^a} \ | \ \mathbf{a - b} \in L \right) \ \subset \ S = k[x_1,\ldots,x_n,y_1,\ldots, y_n],$$
where $L$ is a sublattice of $\mathbb{Z}^n$ and $k$ is a field.  Here $\mathbf{a} = x_1^{a_1}x_2^{a_2}\cdots x_n^{a_n}$ for $\mathbf{a} = (a_1,\ldots, a_n) \in \mathbb{N}^n$.  Binomial ideals of the form $J_L$ are called \emph{Lawrence ideals}.  The following result is Theorem 7.1 in \cite{ConvexPoly}.

\begin{prop}
The following sets of binomials in a Lawrence ideal $J_L$ coincide:
\begin{enumerate}[a)]
	\item Any minimal set of binomial generators of $J_L$.  
	\item Any reduced Gr\"obner basis for $J_L$.  
	\item  The universal Gr\"obner basis for $J_L$.  
	\item The Graver basis for $J_L$.  
\end{enumerate}
\end{prop}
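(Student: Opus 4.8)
The plan is to reduce the proposition to one statement: \emph{every element of the Graver basis of $J_L$ is indispensable}, meaning it occurs, up to a nonzero scalar, in every minimal binomial generating set of $J_L$. I would first record why this suffices. For any toric ideal $I$ one has the standard chain of containments --- every reduced Gr\"obner basis lies in the universal Gr\"obner basis, which lies in the Graver basis --- and both the Graver basis and every reduced Gr\"obner basis generate $I$. So if every Graver element is indispensable, then: any minimal binomial generating set $G$ contains the Graver basis (up to scalars); the Graver basis itself generates $J_L$, so it contains a minimal generating set, which is therefore equal to the Graver basis; hence $G$ equals the Graver basis. Likewise each reduced Gr\"obner basis $\mathcal{G}_<$ contains a minimal generating set (namely the Graver basis) and is contained in the Graver basis, so $\mathcal{G}_<$ equals the Graver basis; and the universal Gr\"obner basis, being $\bigcup_< \mathcal{G}_<$, is the Graver basis too. (I also use the elementary fact that $\lambda m - \mu m'$, with $m,m'$ monomials, lies in a toric ideal only when $\lambda=\mu$ up to a common scalar, which is what makes ``up to a scalar'' meaningful here.)

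To prove indispensability, fix $u\in L$ and write the associated binomial as $B_u = \mathbf{x^{u^+}y^{u^-}} - \mathbf{x^{u^-}y^{u^+}}$, where $u=u^+-u^-$ with $u^\pm\ge 0$ of disjoint support; the $B_u$ generate $J_L$. Since $J_L$ is the toric ideal of the lattice $\{(w,-w):w\in L\}$, and one checks directly that $(v,-v)$ is primitive there iff $v$ is primitive in $L$, the Graver basis of $J_L$ is $\{B_u : u\in L \text{ primitive}\}$, where $u$ is \emph{primitive} if there is no $v\in L\setminus\{0,u\}$ with $v^+\le u^+$ and $v^-\le u^-$. The crucial step is to describe the fiber of the monomial $m_1 := \mathbf{x^{u^+}y^{u^-}}$ modulo $J_L$: because the defining lattice is $\{(w,-w)\}$, the fiber consists precisely of the monomials $\mathbf{x^{u^{+}+w}y^{u^{-}-w}}$ for $w\in L$ with $-u^+\le w\le u^-$. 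The values $w=0$ and $w=-u$ give $m_1$ and $m_2 := \mathbf{x^{u^-}y^{u^+}}$, respectively. When $u$ is primitive these are the only two: for any admissible $w$ different from $0$ and $-u$, the vector $v := -w\in L$ satisfies $-u^-\le v\le u^+$, hence $v^+\le u^+$ and $v^-\le u^-$, while $v\notin\{0,u\}$, contradicting primitivity. Thus the fiber of $m_1$ is exactly $\{m_1,m_2\}$, and $\gcd(m_1,m_2)=1$ since $u^+$ and $u^-$ have disjoint support.

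The final step invokes the standard connectivity criterion for generating sets of a lattice ideal: a set of binomials $G$ generates $J_L$ if and only if, for each fiber, the graph on its monomials --- with an edge joining $m$ and $m'$ whenever $m - m' = \pm n\,g$ for some $g\in G$ and some monomial $n$ --- is connected. Applied to the two-element fiber $\{m_1,m_2\}$, this forces an edge between $m_1$ and $m_2$, i.e.\ $m_1 - m_2 = \pm n\,g$ for some $g\in G$ and monomial $n$; since $\gcd(m_1,m_2)=1$ the multiplier $n$ must be a unit and $g$ must be a scalar multiple of $m_1 - m_2 = B_u$. Hence $B_u$ appears in $G$ up to a scalar, which is exactly the indispensability of $B_u$.

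I expect the fiber computation in the middle step to be the main obstacle: turning the combinatorial definition of a primitive lattice vector into the precise statement that the fiber of $m_1$ contains no third monomial, while carefully tracking the inequalities among $u^\pm$, $w$ and $v$ (with the degenerate case $u=0$ handled separately). The other ingredients --- the containment chain among the various bases, the connectivity criterion, and the pure-difference property of binomials in toric ideals --- are standard, so once the fiber is pinned down the rest of the argument is routine bookkeeping.
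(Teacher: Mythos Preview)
The paper does not actually prove this proposition; it is quoted verbatim as Theorem~7.1 of Sturmfels' \emph{Gr\"obner Bases and Convex Polytopes} and used as a black box. So there is no ``paper's own proof'' to compare against.

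That said, your argument is correct and is essentially the standard one. The heart of the matter is exactly your middle step: for primitive $u\in L$, the fiber of $\mathbf{x^{u^+}y^{u^-}}$ under the Lawrence lattice $\{(w,-w):w\in L\}$ consists of only the two monomials $\mathbf{x^{u^+}y^{u^-}}$ and $\mathbf{x^{u^-}y^{u^+}}$, and these are coprime. Your inequality chase ($-u^+\le w\le u^-$ forces $(-w)^+\le u^+$ and $(-w)^-\le u^-$) is the right way to see this, and the connectivity criterion then yields indispensability immediately. One small point worth tightening: the connectivity criterion and the ``pure difference'' property of binomials in $J_L$ hold for arbitrary lattice ideals, not just prime toric ideals, so you need not assume $L$ is saturated; phrasing those two ingredients at the level of lattice ideals makes the argument go through for all $J_L$ as stated. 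With that adjustment the bookkeeping in your first paragraph (Graver $\supseteq$ universal GB $\supseteq$ each reduced GB, each of which generates, combined with indispensability and homogeneity of $J_L$) cleanly gives all four equalities.
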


Hence Lawrence ideals provide a large source of robust toric ideals, and naturally include the class of generic determinantal ideals.  Given this, it is natural to rephrase the first part of Question \ref{question3.1} as: \begin{question}
Does robustness characterize Lawrence ideals?  
\end{question}

Again the answer is negative.

\begin{example}\label{counterexample} The ideal 
$$\mathtt{I = (b^2e-a^2f,bc^2-adf,ac^2-bde,c^4-d^2ef)}$$
in the polynomial ring $\mathbb{Q}[a,b,c,d,e,f]$ is robust but not Lawrence.  This example was found using the software {\tt Macaulay2} and {\tt Gfan} \cite{M2,gfan}. It is the toric ideal $I_L$ corresponding to the lattice defined by the kernel of 
$$L = \begin{pmatrix} 
	1&1&1&1&1&1 \\ 
	1&1&0&0&0&0 \\
	0&0&1&2&0&0 \\
	1&0&1&0&3&1
	\end{pmatrix}
	$$\end{example}
	
Given this counterexample we ask
\begin{question}
Is there a nice combinatorial description of robust toric ideals?
\end{question}
\begin{rmk}
Heuristically, it is very easy to find robust ideals that are not Lawrence by starting with a Lawrence ideal given by $J_L$.  This lattice $L$ gives rise to a lattice $\tilde{L}\subset \mathbb{N}^{2n}$ such that $J_L = I_{\tilde{L}}$.  By modifying $\tilde{L}$ slightly, it is very often the case that the resulting toric ideal is robust (though often non-homogeneous).  The ubiquity of these examples computationally suggests that a nice combinatorial description of robustness may require imposing further hypotheses.
\end{rmk}

\section{Matrices of Monomials}

\noindent In this section we answer Question \ref{question3.1}.\ref{(ii)}.

\begin{thm} Suppose that $X_i, Y_j$ are monomials of degree at least 1 in some given set of variables $\mathcal{U} = \{ u_1, u_2, \ldots,
  u_d \}$. Let 
  \begin{eqnarray*}
    A = \left(\begin{array}{cccc}
      X_1 & X_2 & \cdots & X_n\\
      Y_1 & Y_2 & \cdots & Y_n
    \end{array}\right), &  & 
  \end{eqnarray*}
where $n \geqslant 3$ and suppose that the set $F$ of $2 \times 2$-minors $X_i Y_j - X_j Y_i$, $i \neq j$ consists of irreducible binomials. Then $F$ is robust if and only if all the monomials $X_i, Y_j$ are relatively prime.  
\end{thm}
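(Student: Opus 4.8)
The theorem asserts that for a $2\times n$ matrix $A$ of monomials in variables $\mathcal U$ (with $n\ge 3$), the set $F$ of $2\times 2$ minors is robust if and only if all the $X_i,Y_j$ are pairwise relatively prime. One direction --- that relative primality gives robustness --- should follow by a direct Gr\"obner-basis argument: after substituting new indeterminates $x_i \mapsto X_i$, $y_i\mapsto Y_i$, the minors $X_iY_j - X_jY_i$ are the images of the generic $2\times n$ minors, which form a universal Gr\"obner basis by \cite{MR1212627} (the coprimality ensures that no extra cancellation or degeneration occurs when passing to the monomial substitution, and that the $F$ remain minimal generators). I would make this precise by checking that every $S$-pair reduction in the generic case pulls back to an $S$-pair reduction here, using coprimality to guarantee that leading terms behave the same way. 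The bulk of the work is the converse: assuming some pair of monomials shares a common variable, produce a term order for which $F$ is not a Gr\"obner basis, or produce a degree-zero syzygy violating minimality.

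**Structure of the converse.** I would first reduce to small cases by the projection/restriction trick (the analogue of Proposition~\ref{restrict}, which holds verbatim since it only used basic Gr\"obner properties): it suffices to derive a contradiction after restricting to a subring generated by the variables appearing in two offending columns, say columns $i$ and $j$ with $\gcd$ among $\{X_i,Y_i,X_j,Y_j\}$ nontrivial. Then I would split into cases according to where the shared variable sits. If a variable $u$ divides, say, both $X_i$ and $X_j$, then the minor $X_iY_j - X_jY_i$ is divisible by $u$, contradicting irreducibility --- so that case is vacuous. The remaining cases are: $u \mid X_i$ and $u \mid Y_j$ (shared across the two rows, different columns), and $u \mid X_i$ and $u\mid Y_i$ (same column --- but then again the binomials involving that column could be reducible, depending on the other entry; I would check this kills the case or reduces it). The genuinely interesting case is a variable shared between $X_i$ and $Y_j$ for $i\ne j$. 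Here I expect to build, for three columns $i,j,\ell$, an explicit lex order that makes the relevant $S$-pair have a leading term not divisible by any minor's leading term, exactly mimicking the $S$-pair computations in Lemma~\ref{main lemma} --- the shared variable is what obstructs the ``only two subtractions'' phenomenon that made the generic case work.

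**The main obstacle.** The hard part will be organizing the case analysis on the gcd structure cleanly: a shared variable can appear with various multiplicities and in various combinations of the four entries $X_i,Y_i,X_j,Y_j$, and one must also track how the third column $\ell$ interacts (which is needed since the decisive $S$-pairs in the quadratic case were between \emph{two} minors sharing a variable, producing a cubic whose reduction requires a third minor). I anticipate that after using irreducibility to discard the cases where a variable divides both entries of one row in two columns, and after normalizing by the restriction lemma, everything collapses to: there exist indices $i,j$ and a variable $u$ with $u\mid X_i$ and $u\mid Y_j$ (up to transposing rows). Then choosing the lex order that orders the quotient variables $X_i/u$-support first, the $S$-pair of the minors from columns $(i,j)$ and $(i,\ell)$ should have a leading monomial forcing a new generator with a forbidden leading term, contradicting the analogue of Lemma~\ref{unique terms}. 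The delicate point is ensuring the leading-term bookkeeping is robust to the monomial substitution --- i.e.\ that $X_i/u$ really is ``smaller'' in the substituted order and nothing accidentally divides the $S$-pair leading term; handling coefficients is not an issue here since toric binomials have unit coefficients, but handling the combinatorics of overlapping monomial supports is exactly where care is needed.
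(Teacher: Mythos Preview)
Your $(\Leftarrow)$ direction is exactly the paper's argument: pull back the Bernstein--Sturmfels--Zelevinsky reductions along $x_{ij}\mapsto X_{ij}$, with coprimality guaranteeing no cancellation.

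The $(\Rightarrow)$ direction, however, has a genuine gap. You plan to restrict to a subring containing three columns $i,j,\ell$ and derive a contradiction from an $S$-pair whose leading term is divisible by no leading term in the restricted set, mimicking Lemma~\ref{main lemma}. The paper's own analysis (its Lemma~4.2) shows why this does not close: assuming $z_{12}=\gcd(X_1,Y_2)\neq 1$ and computing the relevant $S$-pair with $m=3$, one finds that a reducing minor $X_{i_3}Y_{j_3}-X_{j_3}Y_{i_3}$ \emph{does} exist when $n\geq 4$, but necessarily with $i_3\notin\{1,2,3\}$ and $X_{i_3}\mid Y_3^2$. So the $S$-pair reduces --- using a fourth column whose $X$-entry has support contained in the support of $Y_3$. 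Your restriction $F_k=F\cap k[\text{vars of columns }1,2,3]$ may therefore contain this extra minor (its $X_{i_3}$ lives in the subring by construction), and you cannot conclude that $F_k$ is ``the minors of a $2\times 3$ matrix'' to which a clean three-column argument applies. The Lemma~\ref{unique terms} analogue you invoke does not resolve this: the obstruction is not a repeated monomial but the presence of unexpected reducers.

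The paper's route is quite different from what you anticipate. Rather than a local contradiction, it tracks the reducers globally: the conditions $X_{i_m}\mid Y_m^2$ and $Y_{l_m}\mid X_m^2$ for every $m$ force two fixed-point-free permutations on $\{3,\dots,n\}$, and iterating the argument with other pairs in place of $(1,2)$ shows these permutations consist of transpositions. This pairs the columns and forces $n$ even with $A$ of the shape
\[
\begin{pmatrix} z_{12} & z_{21} & z_{34} & z_{43} & \cdots\\ z_{21} & z_{12} & z_{43} & z_{34} & \cdots\end{pmatrix},
\]
whereupon $X_1Y_2-X_2Y_1=z_{12}^2-z_{21}^2$ is reducible, contradicting the hypothesis. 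The contradiction thus comes from \emph{irreducibility failing}, not from an $S$-pair failing to reduce. Your sketch does not anticipate this global rigidity argument, and the local approach you outline will not reach it without substantial additional work handling the extra minors that may appear after restriction.
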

The proof is technical, so we begin by fixing notation.  
Since we will assume that each $X_i Y_j -
  X_j Y_i$ is prime for all $i \neq j$, then, $\gcd (_{} X_i, X_j) = \gcd (
  Y_i, Y_j) = \gcd ( X_i, Y_i) = 1$ for all $i \neq j$. Therefore, if we define
  \begin{eqnarray*}
    z_{i j} = \gcd ( X_i, Y_j), &  & 
  \end{eqnarray*}
  then, we can write
  \begin{eqnarray*}
    X_i = x_i  \prod_{j \neq i} z_{i j} & \text{ and } & Y_j = y_j \prod_{i \neq
    j} z_{i j} .
  \end{eqnarray*}
  Thus,
\[
    \gcd ( x_i, x_j) = \gcd ( y_i, y_j) = 1 \text{ for } \text{all } i \neq j
    \tag{i}
\]
\[
    \gcd ( z_{i j}, z_{k l}) = 1 \text{ whenever } i \neq k \text{ or } j \neq l, 
    \tag{ii}
\]
\[
    \gcd ( x_i, z_{k l}) = 1 \text{ if } i \neq k  \text{ and } \gcd ( y_j,
    z_{k l}) = 1 \text{ if } j \neq l.  \tag{iii}
\]
  Our goal is to show that $z_{i j} = 1$ for all $i \neq j$.
  
\begin{lemma}\label{lemma41}
If $z_{12} \neq 1$ then $n\geqslant 4$ and for each $m\geqslant 3$, there exist permutations $i_m,l_m \neq 1,2,m$ and $j_m,k_m\neq 1,2$ and term orders $>_1$ and $>_2$ such that 
\[
    X_{i_m} Y_{j_m} \mid X_2 Y_m^2  \frac{X_1}{z_{1 2}} \text{ and }  X_{i_m} Y_{j_m} >_1 X_{j_m}
    Y_{i_m}  \tag{iv}
\]
\[
    X_{k_m} Y_{l_m} \mid X_m^2 Y_1  \frac{Y_2}{z_{1 2}} \text{ and }  X_{k_m}Y_{l_m} >_2 X_{l_m}
    Y_{k_m}  \tag{iv'}.
\]
Moreover, 
\[X_{i_m} = x_{i_m}z_{i_m m} \text{ and } x_{i_m} | z_{i_m m},
\]
\[Y_{l_m} = y_{l_m}z_{m l_m} \text{ and } y_{l_m} | z_{m l_m}.
\]
\end{lemma}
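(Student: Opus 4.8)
The assumption is that $z_{12}=\gcd(X_1,Y_2)\neq 1$, i.e.\ there is a genuine common factor of $X_1$ and $Y_2$ hidden in the matrix $A$. I would prove the lemma in three stages: first show $n\geqslant 4$; then, for each $m\geqslant 3$, manufacture a suitable $S$-pair whose leading term forces a dividing relation of the shape (iv) (respectively (iv')); and finally, analyze which minors can supply that divisor, using the primality/coprimality conditions (i)--(iii) to pin down the conclusions on $X_{i_m}$ and $Y_{l_m}$.

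\textbf{Step 1: $n\geqslant 4$.} If $n=3$, the matrix is $2\times 3$ and one computes directly that the three minors $X_1Y_2-X_2Y_1,\ X_1Y_3-X_3Y_1,\ X_2Y_3-X_3Y_2$ form a universal Gr\"obner basis precisely when all the $X_i,Y_j$ are relatively prime (this is essentially the content of Lemma \ref{main lemma}(e) and the proof of Theorem \ref{mainthm}, transported to the monomial-entry setting): an $S$-pair reduction would otherwise fail. Concretely, I would pick the lex order making $X_1$-variables largest, form $S(X_1Y_2-X_2Y_1,\ X_1Y_3-X_3Y_1)$, and observe that if $z_{12}\neq 1$ the obstruction monomial is not divisible by any minor's leading term, contradicting robustness. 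Hence $z_{12}\neq 1$ forces $n\geqslant 4$.

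\textbf{Step 2: constructing the $S$-pair for fixed $m\geqslant 3$.} Write $X_1 = x_1\,z_{12}\prod_{j\neq 1,2}z_{1j}$ and $Y_2 = y_2\,z_{12}\prod_{i\neq 1,2}z_{i2}$, so $z_{12}$ is a common factor. I would consider the two minors $p = X_1Y_2 - X_2Y_1$ and $q = X_mY_2 - X_2Y_m$ (both in $F$ since $m\neq 2$), choose a term order $>_1$ selecting $X_1Y_2$ and $X_mY_2$ as leading terms, and compute the $S$-pair. After cancelling the common factor $z_{12}$ from the $X_1Y_2$ term the $S$-polynomial becomes (up to sign and the unit coefficients) a monomial combination whose leading term divides $X_2Y_m^2\,\frac{X_1}{z_{12}}$. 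Since $F$ is a Gr\"obner basis, some minor $X_{i_m}Y_{j_m}-X_{j_m}Y_{i_m}$ has leading term $X_{i_m}Y_{j_m}$ dividing this monomial, which is (iv); the index constraints $i_m,l_m\neq 1,2,m$ and $j_m,k_m\neq 1,2$ come from ruling out the degenerate choices using conditions (i)--(iii) and Lemma \ref{unique terms} (no monomial is a leading term of two minors). The relation (iv') is symmetric: swap the roles of the $X$'s and $Y$'s, i.e.\ apply the same argument to the transpose, using $\frac{Y_2}{z_{12}}$.

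\textbf{Step 3: extracting the divisibility $x_{i_m}\mid z_{i_m m}$.} From (iv), $X_{i_m}Y_{j_m} = x_{i_m}\bigl(\prod_{j\neq i_m}z_{i_m j}\bigr)\cdot y_{j_m}\bigl(\prod_{i\neq j_m}z_{i j_m}\bigr)$ divides $X_2Y_m^2\,\frac{X_1}{z_{12}}$. Now I would match prime-power valuations on both sides: condition (iii) says $x_{i_m}$ is coprime to every $z_{kl}$ with $k\neq i_m$, and coprime to $x_2$ and (since $i_m\neq 1$) contributes nothing forced by $X_1/z_{12}=x_1\prod_{j\neq 1,2}z_{1j}$ except possibly through $z_{1,i_m}$... the careful bookkeeping shows the only room for $x_{i_m}$ on the right-hand side, after (i)--(iii) eliminate the other factors, is inside $Y_m^2$, i.e.\ inside $y_m^2\prod_{i\neq m}z_{im}$; and coprimality of $x_{i_m}$ with $y_m$ (a $\gcd(X_i,Y_i)=1$-type relation once indices are traced) forces $x_{i_m}\mid z_{i_m m}$, hence also $X_{i_m}=x_{i_m}z_{i_m m}$ after checking no other $z_{i_m j}$ survives. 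The statement for $Y_{l_m}$ follows by the transpose symmetry.

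\textbf{Main obstacle.} The genuinely delicate part is Step 3 — the valuation bookkeeping. The monomials $X_i,Y_j$ factor into many pairwise-coprime pieces $x_i,y_j,z_{kl}$, and one must track, prime by prime, exactly which pieces of the right-hand side of (iv) can absorb each piece of $X_{i_m}Y_{j_m}$. The coprimality relations (i)--(iii) do most of the work, but one has to be scrupulous about the finitely many ``diagonal'' coincidences (e.g.\ when $i_m$ equals some index already appearing) and about the fact that $m$, $1$, $2$ are excluded from the relevant index sets — which is precisely why the index constraints in the lemma statement are needed before the final divisibility can be read off. Getting the exclusions $i_m,l_m\neq 1,2,m$ airtight, via Lemma \ref{unique terms} and primality, is the crux.
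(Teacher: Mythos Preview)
Your Step 2 contains a genuine gap: the $S$-pair you propose does not produce the obstruction monomial you claim. With leading terms $X_1Y_2$ and $X_mY_2$, the two minors $p=X_1Y_2-X_2Y_1$ and $q=X_mY_2-X_2Y_m$ have $\mathrm{lcm}(X_1Y_2,X_mY_2)=X_1X_mY_2$ (since $\gcd(X_1,X_m)=1$), and a direct computation gives
\[
S(p,q)=X_m(X_1Y_2-X_2Y_1)-X_1(X_mY_2-X_2Y_m)=X_2\,(X_1Y_m-X_mY_1),
\]
which reduces to zero immediately via the third minor. No factor $z_{12}$ is ``cancelled'' here, because the leading terms share all of $Y_2$, not merely $z_{12}$. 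The same defect undermines your Step~1: the $S$-pair $S(X_1Y_2-X_2Y_1,\,X_1Y_3-X_3Y_1)$ equals $Y_1(X_3Y_2-X_2Y_3)$ and again reduces trivially, so it yields no obstruction when $n=3$.

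The paper's argument hinges on a different choice of minors: take a lex order with the variables of $z_{12}$ \emph{first}, so that the leading terms of $X_1Y_m-X_mY_1$ and $X_mY_2-X_2Y_m$ are $X_1Y_m$ and $X_mY_2$ respectively. These leading terms share \emph{only} $z_{12}$ (coming from $X_1$ and $Y_2$), and the resulting $S$-pair is exactly
\[
X_2Y_m^2\,\tfrac{X_1}{z_{12}}-X_m^2Y_1\,\tfrac{Y_2}{z_{12}},
\]
which involves no $z_{12}$-variable at all. This last fact is what lets one refine to $>_1$ and $>_2$ making either term leading, and it is also the mechanism for excluding $i_m,j_m\in\{1,2\}$: if $j_m=2$ then $z_{12}\mid Y_{j_m}$ would force $z_{12}$ to divide the $z_{12}$-free monomial $X_2Y_m^2\tfrac{X_1}{z_{12}}$; if $j_m=1$ then $z_{12}\mid X_{j_m}Y_{i_m}$ but $z_{12}\nmid X_{i_m}Y_{j_m}$, contradicting the leading-term inequality since $z_{12}$-variables are largest. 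Once $i_m\geqslant 3$ is established, $X_{i_m}\mid Y_m^2$ follows from $\gcd(X_{i_m},X_1)=\gcd(X_{i_m},X_2)=1$, and then $i_m\neq m$ and $n\geqslant 4$ drop out. Your Step~3 bookkeeping is then essentially correct, but it cannot begin until the right $S$-pair is in hand.
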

\begin{proof}
We will build $<_1$ and $<_2$ in several steps. To begin, take a lex term order $>$ where the variables in $z_{1 2}$ are first.
  Consider the $S$-pair:
  \begin{eqnarray*}
    S \left( \underline{X_1 Y_m} - X_m Y_1, \underline{X_m Y_2} - X_2 Y_m
    \right) = &  & 
  \end{eqnarray*}
  \begin{eqnarray*}
     = \frac{\text{lcm} ( X_1 Y_m , X_m Y_2)}{X_1 Y_m} (
    X_1 Y_m - X_m Y_1) - \frac{\text{lcm} ( X_1 Y_m , X_m Y_2)}{X_m Y_2} ( X_m Y_2 - X_2 Y_m) =
  \end{eqnarray*}
  \begin{eqnarray*}
    = X_m \frac{Y_2}{z_{1 2}} ( X_1 Y_m - X_m Y_1) - Y_m \frac{X_1}{z_{1 2}} (
    X_m Y_2 - X_2 Y_m) = &  & 
  \end{eqnarray*}
  \begin{eqnarray*}
    = X_2 Y_m^2  \frac{X_1}{z_{1 2}} - X_m^2 Y_1 \frac{Y_2}{z_{1 2}} . &  & 
  \end{eqnarray*}
  Since all of the variables in $X_2 Y_m^2  \frac{X_1}{z_{1 2}} - X_m^2 Y_1 
  \frac{Y_2}{z_{1 2}}$ are different from the variables in $z_{1 2}$, then, there exist term orders $>_1$ and $>_2$ refining $>$ for which $X_2 Y_m^2  \frac{X_1}{z_{1 2}}$ is the
  leading term for $>_1$ and $X_m^2 Y_1  \frac{Y_2}{z_{1 2}}$
  is the leading term for $>_2$.  \\
  
   Consider first $>_1$: $X_2 Y_m^2 
  \frac{X_1}{z_{1 2}} >_1 X_m^2 Y_1  \frac{Y_2}{z_{1 2}}$. Since the $X_i
  Y_j - X_j Y_i$ form a Gr\"obner basis with respect to $>_1$, there exist $i_m \neq j_m$ such that
\[
    X_{i_m} Y_{j_m} \mid X_2 Y_m^2  \frac{X_1}{z_{1 2}} \text{ and }  X_{i_m} Y_{j_m} >_1 X_{j_m}
    Y_{i_m}  \tag{iv}
\]
  in this ordering. If $j_m = 2$, then,  $z_{1 2} \mid Y_2 \mid
  X_2 Y_m^2  \frac{X_1}{z_{1 2}}$, which is not true. If $j_m = 1$, then, since
  $z_{1 2} \mid X_1 \mid X_1 Y_{i_m}$ and $z_{1 2} \nmid X_{i_m} Y_1$ and $z_{1 2}$ was chosen to
  have its variables first in $>_1$, then, $X_{j_m} Y_{i_m} = X_1 Y_{i_m}
  >_1 X_{i_m} Y_1 = X_{i_m} Y_{j_m}$, which is not true by (iv). Thus, $j_m \geqslant 3$.
Similarly, we can deduce that $i_m\geqslant 3$.
Thus, $i_m, j_m \geqslant 3$. 

Since $i_m \geqslant 3$, $\gcd ( X_1, X_{i_m}
  ) = \gcd ( X_2, X_{i_m}) = 1$, and we are assuming that $X_i,Y_i\neq 1$ for all $i$ and $X_{i_m} \mid
  X_2 Y_m^2  \frac{X_1}{z_{1 2}}$ then, $X_{i_m} \mid Y_m^2$. Since $\gcd ( X_m,
  Y_m) = 1$, $i_m \neq m$ either. Thus, we have that $i_m \neq 1,2,m \text{ and } , j_m
  \geqslant 3$. So, in particular, if $n = 3$, we already have a
  contradiction. We assume now that $n \geqslant 4$.  Since $X_{i_m} = x_{i_m} \prod_{j \neq i_m} z_{i_m j}$ and $Y_m = y_m \prod_{k \neq m}
  z_{k m}$, properties (i) and (iii) allow us to conclude that $X_{i_m}
  = x_{i_m} z_{i_m m}$ and $x_{i_m} \mid z_{i_m m}$.

Going back to when we chose the ordering $>_1$, consider now the
  ordering $>_2$ for which $X_m^2 Y_1  \frac{Y_2}{z_{1 2}}
  >_2 X_2 Y_m^2  \frac{X_1}{z_{1 2}}$. By a symmetric argument we
  find that there exist $k_m \geqslant 3, l_m\neq 1,2,m$ such that $X_{k_m} Y_{l_m} \mid
  X_m^2 Y_1  \frac{Y_2}{z_{1 2}}$ and, thus, $Y_{l_m} = y_{l_m} z_{m l_m}$ with $y_{l_m}
  \mid z_{m l_m}$.\\
  
    Hence, there exist $i_m, l_m \neq 1,2,m$ and $j_m,k_m\neq 1,2$ such that $X_{i_m} = x_{i_m} z_{i_m m}$, $x_{i_m} \mid
  z_{i_m m}$ and $Y_{l_m} = y_{l_m} z_{m l_m}$, $y_{l_m} \mid z_{m l_m}$.
\end{proof}

\begin{lemma} If $z_{1 2}\neq 1$, then, $n$ is even and we can rearrange the numbers ${1,..,n}$ so that for each $i \leqslant \frac{n}{2}$,
\[
X_{2i} = x_{2i} z_{2i, 2i+1} \text{ and } Y_{2i} = y_{2i} z_{2i+1, 2i}
\]
\[
X_{2i+1} = x_{2i+1} z_{2i+1, 2i} \text{ and } Y_{2i+1} = y_{2i+1} z_{2i, 2i+1}
\]
and $x_{2i}, y_{2i+1} \mid z_{2i, 2i+1}$ and $x_{2i+1}, y_{2i} \mid z_{2i+1, 2i}$.
\end{lemma}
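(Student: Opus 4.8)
The plan is to iterate Lemma~\ref{lemma41} in order to pair up the indices $1,\dots,n$ into "dual partners" and thereby force $n$ to be even. Start by applying Lemma~\ref{lemma41} with $m=3$: this produces indices $i_3,l_3\neq 1,2,3$ together with the structural conclusions $X_{i_3}=x_{i_3}z_{i_3 3}$, $x_{i_3}\mid z_{i_3 3}$ and $Y_{l_3}=y_{l_3}z_{3 l_3}$, $y_{l_3}\mid z_{3 l_3}$. The first step is to argue that $i_3=l_3$: if they were distinct, then $z_{i_3 3}$ and $z_{3 l_3}$ would be coprime by property~(ii), yet both would have to "fit inside" the relevant $S$-pair monomials in a way that (combined with (i) and (iii)) is impossible; more precisely, having two different indices both forced into the $z_{\cdot 3}$-column of relations would let us run the argument of Lemma~\ref{main lemma}(b)-type collisions or produce a repeated lead term contradicting Lemma~\ref{unique terms}. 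So set $m'=i_3=l_3$ and rename it so that the pair $(3,m')$ becomes $(2i+1,2i)$ with $i=1$, i.e. the indices $\{2,3\}$ up to the relabeling in the statement — actually it is cleaner to relabel so that index $3$ is paired with index $4$: we get $X_3=x_3 z_{34}$, $Y_3=y_3 z_{43}$, $X_4=x_4 z_{43}$, $Y_4=y_4 z_{34}$ with the divisibility $x_3,y_4\mid z_{34}$ and $x_4,y_3\mid z_{43}$, which is exactly the asserted form for $i=1$ after shifting the index convention (the statement's ``$2i,2i+1$'' is just a bookkeeping choice; $z_{12}\neq 1$ plays the role of the $i=0$ pair $\{1,2\}$... wait — actually the cleanest reading is that $\{1,2\}$ themselves form the first pair, and we must check $X_1=x_1z_{12}$, $Y_2=y_2z_{12}$ etc. symmetrically by re-running Lemma~\ref{lemma41} with the roles of the two rows or of $1,2$ swapped).

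The second step is the inductive bookkeeping. Having used up indices $\{1,2\}$ and $\{3,4\}$ (say) as mutually dual pairs, suppose some index $m\geq 5$ remains unpaired. Apply Lemma~\ref{lemma41} again with this $m$ in place of $3$; it yields $i_m,l_m\neq 1,2,m$ with the same structure $X_{i_m}=x_{i_m}z_{i_m m}$, $Y_{l_m}=y_{l_m}z_{m l_m}$ and the divisibilities. I would then show $i_m=l_m$ by the same coprimality/unique-terms argument as before, and also show $i_m\notin\{3,4\}$ — because index $3$ (resp.\ $4$) already has its entire ``$z$-part'' accounted for by $z_{34},z_{43}$, so $z_{i_m m}=z_{3 m}$ would be a \emph{new} nontrivial gcd attached to index $3$, contradicting $X_3=x_3 z_{34}$ exactly (that factorization says the only nontrivial $z_{3j}$ is $z_{34}$). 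Hence $i_m=l_m$ is a genuinely new index, and $\{m,i_m\}$ is a new dual pair with the stated factorizations. Repeating, the indices $\{1,\dots,n\}$ are partitioned into dual pairs, so $n$ is even, and after relabeling each pair as $\{2i,2i+1\}$ for $i=1,\dots,n/2$ we obtain precisely the displayed formulas. (One must double-check the symmetric statement for the ``first'' pair $\{1,2\}$: Lemma~\ref{lemma41} as stated gives us $X_{i_m}$ and $Y_{l_m}$ factorizations for $m\geq 3$, so to get the $X_1,Y_1,X_2,Y_2$ halves one re-runs the lemma's $S$-pair computation with $1,2$ themselves playing the role of ``$i_m,l_m$'', which is legitimate since $z_{12}\neq1$ means there \emph{is} something to factor out.)

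**The main obstacle.** The serious point is the claim $i_m=l_m$, i.e.\ that the two indices extracted from the single $S$-pair in Lemma~\ref{lemma41} — one governing the leading-term choice $>_1$, the other governing $>_2$ — are forced to coincide. Lemma~\ref{lemma41} on its own only says $i_m,l_m\neq 1,2,m$; collapsing them into one index is what makes the pairing (and hence the parity of $n$) work, and it must come from a fresh argument combining: (a) the divisibility refinements $x_{i_m}\mid z_{i_m m}$ and $y_{l_m}\mid z_{m l_m}$, (b) the coprimality relations (i)--(iii), and (c) possibly another $S$-pair / Lemma~\ref{unique terms} collision if $i_m\neq l_m$ is assumed. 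I expect this to require exhibiting an explicit term order in which some monomial is forced to be a repeated leading term, or in which Buchberger's algorithm produces a nonzero reduction with no available divisor — the same flavor of argument as in Lemma~\ref{main lemma}, but now tracking the $z$-variables carefully. A secondary (but routine) nuisance is making the index relabeling in the conclusion consistent: the statement writes the pairs as $\{2i,2i+1\}$, which requires that the pair containing $1$ be $\{1,2\}$ and forces an even total, so the write-up should first establish the pairing abstractly and only then choose the labeling that matches the display.
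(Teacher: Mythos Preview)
Your overall architecture is right: pair up the indices and conclude $n$ is even. You also correctly isolate the crux, namely showing $i_m=l_m$. But your proposed attack on that step is not an argument. Saying that $z_{i_3 3}$ and $z_{3 l_3}$ are coprime and ``would have to fit inside the relevant $S$-pair monomials in a way that is impossible'' does not pin anything down, and the appeal to Lemma~\ref{main lemma}(b) or Lemma~\ref{unique terms} is misplaced: those lemmas are about quadratic binomials in Section~2, not about the higher-degree minors $X_iY_j-X_jY_i$ here. There is no evident collision of leading terms that falls out just from $i_m\neq l_m$.

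The paper resolves this obstacle by a different mechanism. First, property~(ii) together with the conclusion $X_{i_m}=x_{i_m}z_{i_m m}$ of Lemma~\ref{lemma41} forces $m\mapsto i_m$ (and likewise $m\mapsto l_m$) to be a fixed-point-free permutation of $\{3,\dots,n\}$: if $i_m=i_{m'}=i$ with $m\neq m'$, then $z_{ij}=1$ for all $j\neq i,m$ and for all $j\neq i,m'$, whence $X_i=1$, contradicting $\deg X_i\geq 1$. Now fix any $m'\geq 3$. Since $X_{i_{m'}}=x_{i_{m'}}z_{i_{m'}m'}\neq 1$ and $x_{i_{m'}}\mid z_{i_{m'}m'}$, we get $z_{i_{m'}m'}\neq 1$, so Lemma~\ref{lemma41} can be \emph{re-run} with the pair $(i_{m'},m')$ playing the role of $(1,2)$. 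This produces a second pair of fixed-point-free permutations, now on $\{1,\dots,n\}\setminus\{m',i_{m'}\}$. The key point is rigidity: by property~(ii) the factorization $X_i=x_iz_{i,\sigma(i)}$ determines $\sigma(i)$ uniquely, so the two systems of permutations must agree on the common domain $\{1,\dots,n\}\setminus\{1,2,m',i_{m'}\}$. The only way a fixed-point-free permutation of $\{3,\dots,n\}$ and one of $\{1,\dots,n\}\setminus\{m',i_{m'}\}$ can agree on their overlap is if both $(1,2)$ and $(m',i_{m'})$ are $2$-cycles; in particular $i_{m'}=l_{m'}$. Running this over all $m'$ shows $m\mapsto i_m$ is an involution consisting of transpositions, giving the pairing, the parity of $n$, and (as a byproduct) the $X_1,Y_1,X_2,Y_2$ factorizations you were worried about at the end.

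So the missing idea is not a new $S$-pair calculation but the observation that $z_{i_{m'}m'}\neq 1$ lets you bootstrap Lemma~\ref{lemma41} from a second base pair, and that property~(ii) then forces the two resulting permutation structures to be compatible.
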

\begin{proof}
By property (ii) and Lemma 4.2 we have that $m \mapsto i_m$ and $m \mapsto l_m$ are permutations on $\{3,..,n\}$ with no fixed points. Thus, for each $i\geqslant 3$, there exists $m\geqslant 3$ such that $m\neq i$ and $i = i_m$, $X_i = x_i z_{i m}$ with $x_i\mid z_{i m}$ and for each $l\geqslant 3$, there exists $m\geqslant 3$, $m\neq l$ such that $l = l_m$ and $Y_l = y_l z_{m l}$.\\

Fix $m'\neq 1,2$. Then, $X_{i_{m'}} = x_{i_{m'}} z_{i_{m'} m'}$ and $x_{i_{m'}} \mid z_{i_{m'} m'}$. Since we
  assumed that $X_{i_{m'}} \neq 1$, then, $z_{i_{m' }m'} \neq 1$. So now, repeating
  the whole argument with $m'$ and $i_{m'}$ instead of with $1$ and $2$ (recall
  that $i_{m'} \neq 1, 2, m'$), we would get similar permutations on $\{ 1,
  \ldots, n \} \setminus \{ m', i_{m'} \}$. But, by property (ii), these permutations have to agree with the permutations $m \mapsto
  i_m$ and $m \mapsto l_m$ from above on the set $\{ 1, \ldots, n \} \setminus
  \{ 1, 2, m', i_{m'} \}$. Thus, $( 1, 2)$ and $( m', i_{m'})$ will be transpositions
  in all of these permutations (and, in particular, $i_{m'} = l_{m'}$). \\
  
  Since we can run the above argument with any $m'\neq 1,2$, we have that the permutations $m\mapsto i_m$ and $m\mapsto l_m$ agree and are composed of transpositions $(m, i_m)$. In particular, $n$ is even and, after rearranging the numbers from $\{1,..,n\}$ so that $i_{2k} = 2k+1$ and, thus, $i_{2k+1} = 2k$ for all $k =1,..,n/2$, our matrix $A$ looks as follows:
   \begin{eqnarray*}
    A = \left(\begin{array}{ccccc}
      x_1 z_{1 2} & x_2 z_{2 1} & x_3 z_{3 4} & x_4 z_{4 3} & \cdots\\
      y_1 z_{2 1} & y_2 z_{1 2} & y_3 z_{4 3} & y_4 z_{3 4} & \cdots
    \end{array}\right) . &  & 
  \end{eqnarray*}
 The rest of the statement of the Lemma follows from Lemma 4.2.
\end{proof}

\begin{proof}[Proof of Theorem:] 

($\Rightarrow$): It suffices to show that $z_{ij} =1$ for all $i,j$.  Without loss of generality, assume that $z_{12} \neq 1$.  

By Lemma 4.2 we have that 
\[
  X_{i_m} Y_{j_m} \mid X_2 Y_m^2  \frac{X_1}{z_{1 2}}\ \ \text{ and }\ \ X_{k_m} Y_{l_m} \mid X_m^2 Y_1  \frac{Y_2}{z_{1 2}}
\]
for all $m\geqslant 3$. But by (the proof of) Lemma 4.3 we know that the above hold when we substitute $1$ and $2$ with any $m'$ and $i_{m'}$ such that $m\neq i_{m'},m'$, i.e.
\[
  X_{i_m} Y_{j_m} \mid X_{i_{m'}} Y_m^2  \frac{X_{m'}}{z_{m' i_{m'}}}\ \ \text{ and }\ \ X_{k_m} Y_{l_m} \mid X_m^2 Y_{m'}  \frac{Y_{i_{m'}}}{z_{m' i_{m'}}}
\]
Rewriting out the expressions in the form $X_{i_m} = x_{i_m} z_{i_m m}$ and $Y_{l_m} = y_{l_m} z_{m l_m}$ and then canceling repeating factors, we get that
\[
  x_{i_m} y_{j_m} z_{l_{j_m} j_m } \mid x_{i_{m'}} z_{i_{m'} m'} y_m^2 z_{i_m m} x_{m'}\ \ \text{ and }\ \ x_{k_m} z_{k_m l_{k_m}} y_{l_m} \mid x_m^2 z_{m l_m} y_{m'} z_{i_{m'} m'}  y_{i_{m'}}
\]
for all $m'\neq m, i_m$. Therefore, we have that for every $m'\neq m, i_m$
\[
  z_{ l_{j_m} j_m} \mid x_{i_{m'}} z_{i_{m'} m'} y_m^2 z_{i_m m} x_{m'}\ \ \text{ and }\ \  z_{k_m l_{k_m}}  \mid x_m^2 z_{m l_m} y_{m'} z_{i_{m'} m'}  y_{i_{m'}}
\]
Noting that $x_{i_{m'}}\mid z_{i_{m'} m'}$ and $x_{m'}\mid z_{m' l_{m'}}$ and, similarly for $y_{m'}, y_{i_{m'}}$, switching $m'$ with $i_{m'}$, and using (ii), shows us that
\[
  z_{l_{j_m} j_m} \mid  y_m^2 z_{i_m m} \ \ \text{ and }\ \  z_{k_m l_{k_m}}  \mid x_m^2 z_{m l_m} 
\]
Again, by property (ii), the only way for this to happen is if $j_m = k_m = m$. In that case, we have that
\[
  x_{i_m} y_{m} z_{l_{m} m } \mid x_{i_{m'}} z_{i_{m'} m'} y_m^2 z_{i_m m} x_{m'}\ \ \text{ and }\ \ x_{m} z_{m l_{m}} y_{l_m} \mid x_m^2 z_{m l_m} y_{m'} z_{i_{m'} m'}  y_{i_{m'}}
\]
Again, by (i),(ii), and (iii), and by switching $m'$ and $i_{m'}$, we have that
\[
  x_{i_m} y_{m} z_{i_{m} m } \mid y_m^2 z_{i_m m} \ \ \text{ and }\ \ x_{m} z_{m l_{m}} y_{l_m} \mid x_m^2 z_{m l_m}
\]
After cancelations,
\[
  x_{i_m} \mid y_m \ \ \text{ and }\ \ y_{l_m} \mid x_m.
\]
Thus, $x_{i_m} = y_{l_m} = 1$. Since $i$ and $l$ are permutations, we have that $x_m = y_m$ for all $m$. Thus, our matrix looks like this
  \begin{eqnarray*}
    A = \left(\begin{array}{ccccc}
       z_{1 2} &  z_{2 1} &  z_{3 4} &  z_{4 3} & \cdots\\
       z_{2 1} &  z_{1 2} &  z_{4 3} &  z_{3 4} & \cdots
    \end{array}\right) . &  & 
  \end{eqnarray*}
But then, we have that, for example, $X_1Y_2 - X_2Y_1 = z_{1 2}^2 - z_{2 1}^2$, which is not prime! Contradiction! Thus, $z_{i j}=1$ for all $i\neq j$.

  Thus, $z_{1 2} = 1$ and by symmetry,
  $z_{i j} = 1$ for all $i \neq j$ and $\gcd ( X_i, Y_j) = 1$ for all $i \neq
  j$. Combined with the assumptions of the theorem statement, we get that
  $\gcd ( X_i, X_j) = \gcd ( X_i, Y_i) = \gcd ( Y_i, Y_j) = 1$ for all $i \neq
  j$, that is, all the entries of $A$ are pairwise relatively prime.

  ($\Leftarrow$): Assume that the entries of $A$ are pairwise relatively prime.  Let $<$ be any monomial term order.  To show that the $2\times 2$ minors of $A$ are a Gr\"obner basis with respect to $<$, we just need to show that all $S$-pairs reduce to zero.  By the result of Bernstin-Sturmfels-Zelevinsky, such a reduction is guaranteed to exist for a generic matrix $X = (x_{ij})$ .  Since all entries of $A$ are relatively prime, it is clear that such a reduction will extend simply by the ring map: $x_{ij} \mapsto X_{ij}.$
\end{proof}

\section{Robustness of Higher Betti Numbers}
Our interest in robust ideals originated with the following classical inequality: 
\begin{equation}\label{robust betti} \beta_{i}(S/\initial_< I)\leq \beta_{i} (S/I) \ \ \mbox{ for all } i.\end{equation}
It is natural to ask for which ideals and term orders equality holds (for all $i$).  In the setting of determinantal ideals, Conca et al proved in \cite{ConcaHostenEtal} that the ideal of maximal minors of a generic matrix has some initial ideal with this property.  They also gave examples of determinantal ideals for which no initial ideal has this property. In a different vein, Conca, Herzog and Hibi showed in \cite{ConcaHerzogHibi} that if the generic initial ideal ${\mathrm Gin}(I)$ has $\beta_i(I) = \beta_i ({\mathrm Gin}(I))$ for some $i>0$, then $\beta_k(I)$ and $\beta_k ({\mathrm Gin}(I))$ also agree for $k\geqslant i$.

Our interest was to instead approach the inequality \ref{robust betti} in a universal setting, i.e. to consider when equality holds for \emph{all} term orders $<$.  In this case we say that $I$ has \emph{robust Betti numbers}.  The following result is due to the first author \cite{Boocher}

\begin{thm}\label{all initial ideals have linear resolutions}
If $I:=I_k(X)$ is the ideal of maximal minors of a generic $k\times n$ matrix $X$ and $<$ is any term order, then 
\begin{equation}\label{robust betti} \beta_{ij}(S/\initial_< I)= \beta_{ij} (S/I) \ \ \mbox{ for all } i,j.\end{equation}
In particular, every initial ideal is a Cohen-Macaulay, squarefree monomial ideal with a linear free resolution.  Further, the resolution can be obtained from the Eagon-Northcott complex by taking appropriate lead terms of each syzygy.
\end{thm}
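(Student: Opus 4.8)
The plan is to transport the Eagon--Northcott resolution of $S/I$ across the Gr\"obner degeneration. Recall the general picture: for a weight vector $w$ representing $<$, there is a flat family over $k[t]$ with generic fibre $S/I$ and special fibre $S/\initial_< I$, so these rings have the same Hilbert series and the same Krull dimension (hence $\codim \initial_< I = \codim I = n-k+1$), and the graded Betti numbers obey the usual semicontinuity bound $\beta_{ij}(S/\initial_< I)\ge\beta_{ij}(S/I)$. Since $I=I_k(X)$ with $X$ generic, $S/I$ is resolved by the minimal, linear complex $\EN_\bullet$ of length $n-k+1$; our target is to show $S/\initial_< I$ has a resolution of the same numerical shape, and in fact one obtained from $\EN_\bullet$ by passing to lead terms.

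The first step is a formal reduction: \emph{it suffices to show that $\initial_< I$ has a linear resolution}. Indeed, if it does --- of some finite length $p'$ --- then equating the numerators of the Hilbert series of $S/\initial_< I$ and $S/I$ (polynomials whose exponents are $0$ and $k, k+1, \dots$, with no cancellation) forces their top degrees to agree, so $p' = n-k+1$, and then forces $\beta_{ij}(S/\initial_< I) = \beta_{ij}(S/I)$ for all $i,j$. Cohen--Macaulayness follows from Auslander--Buchsbaum, since then $\projdim(S/\initial_< I) = n-k+1 = \codim(\initial_< I)$; and $\initial_< I$ is generated by squarefree monomials because, by Bernstein--Sturmfels--Zelevinsky \cite{MR1212627}, the $k\times k$ minors $\Delta_T$ form a universal Gr\"obner basis and each $\Delta_T$ is a signed sum of squarefree monomials, so each lead monomial $\initial_<\Delta_T$ is squarefree.

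The substance is therefore to prove $\initial_< I$ has a linear resolution, and I would attack this through the ``sparse determinantal'' picture. By \cite{MR1212627}, $\initial_< I$ is minimally generated by the $\binom nk$ squarefree degree-$k$ monomials $\initial_<\Delta_T$, one per $k$-subset of columns; the leading monomial picks out a single transversal of each $k\times k$ submatrix of $X$. I would then show that these monomials are exactly the set of maximal minors of a suitable \emph{sparse generic} matrix $Y$ --- one, like an upper-triangular ``staircase'', in which each maximal minor degenerates to a single monomial --- and invoke the extension of the Eagon--Northcott resolution to sparse determinantal ideals \cite{Boocher}: $I_k(Y)$ is then resolved by $\EN_\bullet$ with its entries replaced by their images, a minimal linear complex of length $n-k+1$. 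This at once yields the equality of graded Betti numbers, Cohen--Macaulayness, and the last sentence of the theorem. (For the diagonal term order this is transparent: $\initial_<\Delta_T = x_{1,t_1}\cdots x_{k,t_k}$ for $T = \{t_1 < \cdots < t_k\}$, and these are precisely the maximal minors of the staircase matrix with $(i,j)$-entry $x_{ij}$ for $j\ge i$ and $0$ otherwise; one could equally argue via shellability of the associated Stanley--Reisner complex, or linear quotients, in the spirit of Herzog--Trung.)

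The hard part will be the uniformity over all term orders: showing that, for an \emph{arbitrary} $<$, the selected transversals assemble into the maximal minors of a single sparse matrix that is still ``generic enough'' for the Eagon--Northcott machinery to apply (equivalently, that the Stanley--Reisner complex of $\initial_< I$ is always shellable). The diagonal case is clean, but a general term order selects an intricate, seemingly unstructured pattern of lead monomials; I would try to tame it by normalising --- permuting the rows and columns of $X$, which is an automorphism of $S$, and refining the order --- to a uniform ladder configuration, and then reduce to the diagonal case. This combinatorial reduction is where I expect all the real difficulty to lie.
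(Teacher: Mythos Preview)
This theorem is not proved in the present paper: it is simply quoted as a result of \cite{Boocher}, so there is no in-text argument to compare your proposal against.

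That said, a couple of remarks on your outline. Your reductions --- first to showing $\initial_< I$ has a linear resolution, then to realizing $\initial_< I$ as $I_k(Y)$ for a sparse generic $Y$ --- are in the spirit of that reference, and the Hilbert-series and Auslander--Buchsbaum bookkeeping is correct. But since you ultimately invoke \cite{Boocher} itself for the sparse Eagon--Northcott machinery, your proposal is less an independent proof than a sketch of how that paper's main theorem specializes to the generic case. The step you yourself flag as hard --- that for an \emph{arbitrary} term order the lead monomials $\initial_<\Delta_T$ assemble into the maximal minors of a single sparse matrix to which the machinery applies --- is indeed the crux, and you have not carried it out; ``normalise to a ladder and reduce to the diagonal case'' is a hope, not an argument.

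One concrete slip in the part you do work out: your staircase $Y$ with $Y_{ij}=x_{ij}$ for $j\ge i$ and $0$ otherwise does \emph{not} have monomial maximal minors. Already for $k=2$, $n=3$ the $\{2,3\}$-minor is $x_{12}x_{23}-x_{13}x_{22}$, so $I_k(Y)$ is not the diagonal initial ideal. The matrix you want is the narrower band $Y_{ij}=x_{ij}$ for $i\le j\le i+(n-k)$ and $0$ otherwise; then each $k\times k$ submatrix is upper triangular and every maximal minor is the diagonal monomial.
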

A combination of Theorems \ref{all initial ideals have linear resolutions} and \ref{mainthm} yields 

\begin{cor}
Let $I$ be a toric ideal generated in degree two.  If $I$ is robust, then $I$ has robust Betti numbers. 
\end{cor}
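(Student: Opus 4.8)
The plan is to combine Theorem~\ref{mainthm} with Theorem~\ref{all initial ideals have linear resolutions} in the obvious way. Let $I$ be a robust toric ideal generated in degree two. By Theorem~\ref{mainthm}, after rescaling the variables the generators $F$ of $I$ decompose into irreducible robust components, each of which is either a hypersurface (a single prime quadratic binomial) or the ideal of $2\times 2$ minors of a generic $2\times n_j$ matrix. Since robust Betti numbers of a join (disjoint union of generators in distinct variables) are determined by those of the components — the minimal free resolution of a tensor product of algebras over $k$ is the tensor product of the resolutions, and forming initial ideals commutes with this decomposition because each component lives in its own disjoint set of variables — it suffices to verify that each component has robust Betti numbers.

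First I would dispose of the hypersurface components: if $I_j=(f)$ for a single nonzerodivisor $f$, then $S/I_j$ has the Koszul resolution $0\to S(-2)\to S\to S/I_j\to 0$, and every initial ideal $\initial_<(f)$ is again principal generated in degree $2$, so it has the same graded Betti numbers trivially. Next, for a determinantal component $I_j=I_2(X_j)$ with $X_j$ a generic $2\times n_j$ matrix, this is exactly the $k=2$ case of Theorem~\ref{all initial ideals have linear resolutions}: since $\initial_< I_j = \initial_<(I_2(X_j))$ for any term order $<$, we get $\beta_{ij}(S/\initial_< I_j) = \beta_{ij}(S/I_j)$ for all $i,j$, i.e. the determinantal component has robust Betti numbers. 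Finally I would assemble the pieces: writing $I = I_1 + \cdots + I_r$ with the $I_\ell$ in pairwise disjoint variables, any term order $<$ on $S$ restricts to term orders on the subrings, $\initial_< I = \initial_< I_1 + \cdots + \initial_< I_r$ (again because the variable sets are disjoint), and the Künneth/tensor-product formula for Betti numbers then gives $\beta_{ij}(S/\initial_< I) = \beta_{ij}(S/I)$ for all $i,j$.

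The only genuine subtlety — and the step I would be most careful about — is the reduction to components: one must check that $\initial_<$ genuinely splits along the disjoint variable decomposition for an \emph{arbitrary} term order, not merely that $I$ itself splits. This holds because a Gröbner basis of $I = I_1 + \cdots + I_r$ with respect to $<$ is the union of Gröbner bases of the $I_\ell$ (as $F$ is a universal Gröbner basis of $I$ and its elements are already partitioned into the $I_\ell$), so the initial ideal is the sum of the initial ideals, each in its own variables; then the tensor-product formula for minimal free resolutions applies verbatim on both the $I$ side and the $\initial_< I$ side. Everything else is bookkeeping, so I do not expect any real obstacle beyond this point.
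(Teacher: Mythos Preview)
Your proposal is correct and is precisely the argument the paper has in mind: the paper offers no proof beyond the sentence ``A combination of Theorems~\ref{all initial ideals have linear resolutions} and~\ref{mainthm} yields,'' and you have spelled out that combination accurately, including the one nontrivial step (that the decomposition into disjoint-variable components is respected by passing to initial ideals, so that the K\"unneth formula applies on both sides).
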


Our original hope with this project was that all robust toric ideals had robust Betti numbers.  Unfortunately, the situation seems much more delicate.  

\begin{example}
Using Gfan \cite{gfan}, we were able to check that the Lawrence ideal $J_L$ corresponding to the lattice $L$ given by the matrix 
$$\bgroup\begin{pmatrix}1&
        1&
        1&
        1&
        1\\
        0&
        1&
        {2}&
        {7}&
        {8}\\
        \end{pmatrix}\egroup$$
has initial ideals with different Betti numbers.
\end{example}  

\section{Acknowledgments}
Many of the results in this paper were discovered via computations using Macaulay 2 \cite{M2} and Gfan \cite{gfan}.  We thank David Eisenbud and Bernd Sturmfels for many useful discussions.  Finally we thank Seth Sullivant for introducing us to Lawrence ideals and starting us on the path toward Example \ref{counterexample}.

\bibliographystyle{plain}
\bibliography{biblio.bbl}
\end{document}